\newtheorem{theorem}{Theorem}[section]
\newtheorem{proposition}[theorem]{Proposition}
\newtheorem{conjecture}[theorem]{Conjecture}
\newtheorem{question}[theorem]{Question}
\newtheorem{obs}[theorem]{Observation}
\setlist[enumerate]{topsep=0pt,partopsep=1ex,parsep=1ex}
\newcommand{\ZZ}{\ensuremath{\mathbb Z}}
\newcommand{\RR}{\ensuremath{\mathbb R}}
\newcommand{\floor}[1]{\left \lfloor #1 \right \rfloor}
\newcommand{\ceil}[1]{\left \lceil #1 \right \rceil}
\title{Covering triangular grids with multiplicity}
\author{Abdul Basit\thanks{Department of Mathematics, Xiamen University Malaysia; {\tt basit.abdul@gmail.com}.}
\and Alexander Clifton\thanks{Discrete Mathematics Group, Institute for Basic Science, Daejeon, South Korea {\tt yoa@ibs.re.kr}. The author is supported by the Institute for Basic Science (IBS-R029-C1) and in part by NSF Grant DMS-1945200.}
\and Paul Horn \thanks{Department of Mathematics, University of Denver, Denver, USA; {\tt paul.horn@du.edu}.  The author is supported in part by Simons Collaboration Grant \#525039.}}
\begin{document}

\maketitle
\begin{abstract}
Motivated by classical work of Alon and F\"uredi, we introduce and address the following problem: determine the minimum number of affine hyperplanes in $\mathbb{R}^d$ needed to cover every point of the \emph{triangular grid} $T_d(n) := \{(x_1,\dots,x_d)\in\mathbb{Z}_{\ge 0}^d\mid x_1+\dots+x_d\le n-1\}$ at least $k$ times. For $d = 2$, we solve the problem exactly for $k \leq 4$, and obtain a partial solution for $k > 4$. We also obtain an asymptotic formula (in $n$) for all $d \geq k - 2$. The proofs rely on combinatorial arguments and linear programming.
\end{abstract}
\section{Introduction}\label{sec:intro}

A classical theorem of Alon and F\"uredi \cite{AF}, answering a question of Komj\'ath \cite{K}, states that at least $n$ 
hyperplanes are necessary to cover the punctured hypercube $\{0,1\}^{n} \setminus \{\overrightarrow{0}\}$, if the origin must be uncovered.  This result, which can be proved by an application of the combinatorial Nullstellensatz~\cite{Alo99}, has been built on over the years.  Recent work of the second author and Huang \cite{CH} investigated the situation where each point on the hypercube (except the origin) must be covered at least $k$ times by the hyperplanes.  This was, in turn, built on by Sauermann and Wigderson \cite{SW} who considered more general polynomial coverings of the hypercube and by Bishnoi, Boyadzhiyska, Das, and den Bakker~\cite{BBBD} who considered hyperplane covers of two-dimensional rectangular grids.

In this paper, inspired by the previous work on covering the hypercube, we study covering another object: the triangular grid, defined as $T_d(n) := \{(x_1,\dots,x_d) \in \ZZ_{\geq{0}}^d\mid x_1+\dots+x_d\leq{n-1}\}$. Note that $T_d(n)$ is affinely equivalent to a triangular lattice in $\RR^d$ with $n$~points along each edge of the boundary.

While the triangular
grid could be considered a `punctured' version of a rectangular grid, this work (unlike the work on the hypercube and in \cite{BBBD}) does not try to avoid the punctured part.  The main challenge (and interest), then, in this setting comes from the fact that --- unlike in a hypercube or a rectangular grid --- the typical point 
of the grid does not lie in a hyperplane of maximum size. 

 Let $f(n,d)$ be the fewest number of affine hyperplanes needed to cover every point of $T_d(n)$ at least once. Noting that $f(n, d)$ is the optimum of an integer program, we consider the following linear programming relaxation: to every affine hyperplane $H$ in $\RR^d$, assign a nonnegative weight $w(H)$ such that 
$$ \sum_{H \ni p} w(H) \geq 1 \quad \text{ for every } p \in T_d(n). $$
We refer to such a weight assignment as a \emph{fractional cover}. Let $f^*(n,d)$ be the minimum size of a fractional cover, i.e. the minimum of $\sum_{H} w(H)$ over all fractional covers. Finally, we call a multiset of hyperplanes which covers every point of $T_d(n)$ at least $k$ times a $k$\emph{-cover} (a 1-cover is simply a \emph{cover}). Let $f(n,d,k)$ be the minimum cardinality of a $k$-cover of $T_d(n)$, so $f(n,d,k)$ denotes the fewest number of hyperplanes needed to cover every point of $T_d(n)$ at least $k$ times. Note that $f(n, d, 1) = f(n, d)$.

Here, we study the fractional and integral problems laid out: first in dimension $2$, then for higher dimensions.  In dimension 2, we completely solve the fractional problem (Theorems~\ref{1mod3}~and ~\ref{20mod3}), and then turn to the integral problem where we completely determine $f(n,2,k)$ for $k \leq 4$ (Theorem~\ref{2int}).  The most interesting thing we notice, however, in dimension $2$ is a seemingly strong connection between the solution to the fractional problem for the triangular grid of size $k$, $f^{*}(k,2)$ and the solution to the integral problem for covering arbitrarily large grids $k$ times, $f(n, 2,k)$.  In particular, it appears (Conjecture~\ref{duality})  that
\[ f(n, 2,k) = f^*(k,2)n + O_k(1).\footnote{We use $O_{v_1, v_2, \dots, v_k}$ to represent the usual big-$O$ notation where the constant of proportionality depends on the variables $v_1, v_2, \dots, v_k$. Unless otherwise specified, the big-$O$ notation represents asymptotics as $n \rightarrow \infty$.}   \]
While we are unable to completely resolve this question, we prove that $f^{*}(k,2)n +O_k(1)$ is an upper bound for $f(n,2,k)$ (Proposition \ref{upper}) and prove the conjecture under certain natural assumptions on the admissible lines in the covering (Theorem \ref{partialpr}).

In Section \ref{sec:highdim}, we turn to higher-dimensional analogues of the questions.  Perhaps the highlight here is Theorem \ref{3below}, which determines $f(n,d,k)$ asymptotically, so long as $d$ is sufficiently large compared with $k$.  We also show that the natural analogue of Conjecture \ref{duality}, mentioned above, fails for higher dimensions.  However, we conjecture an asymptotic formula for $f(n,d,k)$ for all $d$ and $k$ (Conjecture~\ref{megaconjecture}); this and other open problems are mentioned in Section \ref{sec:oq}. 

\paragraph{Notation and terminology}
By a \emph{standard hyperplane}, we mean a 
hyperplane of the form $x_i=c$ or $x_1+\dots+x_d=n-1-c$ with $c\in\{0,\dots,n-1\}$. In particular, standard hyperplanes contain at least one point of $T_d(n)$ and are parallel to a face of its convex hull. When $c = 0$, i.e., the hyperplane contains a face of the convex hull, we refer to the hyperplane as a \emph{bounding hyperplane}. A \emph{face} of $T_d(n)$ is the set $T_d(n) \cap H$ where $H$ is a bounding hyperplane.
If a hyperplane is not standard, we say it is \textit{non-standard}. Of course, for $d = 2$, we say lines instead of hyperplanes and, for $d = 3$, we say planes instead of hyperplanes.

\section{Triangular grids in two dimensions}\label{twodim}
Throughout this section, we will regard $T_2(n)$ as $\{(x,y)\in\mathbb{Z}_{\ge 0}^2\mid x+y\le n-1\}$.
\subsection{Fractional covering}\label{fractional}

\begin{theorem}\label{1mod3}
    $f^*(3j+1,2)=2j+1$ for all integers $j\geq{0}$.
\end{theorem}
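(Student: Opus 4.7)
The plan is to prove the two inequalities $f^*(3j+1,2)\le 2j+1$ and $f^*(3j+1,2)\ge 2j+1$ via LP duality. For the upper bound I would construct an explicit fractional cover of total weight $2j+1$; for the lower bound, by LP weak duality, it suffices to produce a nonnegative ``fractional packing'' $v\colon T_2(3j+1)\to\RR_{\ge 0}$ satisfying $\sum_{p\in L}v(p)\le 1$ for every line $L\subset\RR^2$ and $\sum_p v(p)=2j+1$.

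For the construction I would pass to the barycentric-type coordinates $(a,b,c)=(x,y,3j-x-y)$ on $T_2(3j+1)$, which makes the $S_3$-action permuting the three edges manifest. A natural symmetric first attempt is to use only standard lines, assigning a common weight $w_k$ to each of the three standard lines at level $k$ (namely $a=k$, $b=k$, and $c=k$); the coverage requirement then reduces to $w_a+w_b+w_c\ge 1$ for every $(a,b,c)\in\ZZ_{\ge 0}^3$ with $a+b+c=3j$, and the total weight becomes $3\sum_k w_k$. Solving this LP by hand for small $j$ suggests the family $w_k=(j+2-k)/6$ for $0\le k\le j+1$ (and $w_k=0$ otherwise), which can be verified case-by-case and yields total weight $(j+2)(j+3)/4$. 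This equals the target $2j+1$ exactly when $j\in\{1,2\}$; for $j\ge 3$ the standard-line LP strictly exceeds $2j+1$, so the construction must incorporate non-standard lines (of slope $\pm 1$, $\pm 1/2$, etc.) which also pass through many lattice points and allow the total weight to be shaved down. An inductive construction exploiting the embedding $T_2(3(j-1)+1)\hookrightarrow T_2(3j+1)$, extending a $(2j-1)$-weight cover of the inner triangle by adding exactly weight $2$ to cover the outer ring, is a natural way to organize this.

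For the lower bound, again exploiting the $S_3$-symmetry, I would take $v(p)$ to depend only on the orbit of $p$ (i.e., on the multiset $\{a,b,c\}$). Complementary slackness with the primal cover tells me which line constraints ought to be tight at the optimum, and I would solve the resulting linear system for the orbit weights in small cases in order to pattern a general formula. The principal obstacle is verifying the constraint $\sum_{p\in L}v(p)\le 1$ for \emph{all} non-standard lines $L$: infinitely many lines pass through multiple lattice points of $T_2(3j+1)$, and those with small-denominator rational slope are the most delicate. A structural case analysis on the slope of $L$, combined with a symmetry/averaging argument, should suffice. A secondary obstacle is producing a clean general-$j$ formula on both sides; if closed forms are elusive, induction on $j$ matched with a recursive construction mirroring the primal side may reduce the problem to a small set of base cases plus a step that adds exactly $2$ to both the primal total and the dual value as one passes from $T_2(3(j-1)+1)$ to $T_2(3j+1)$.
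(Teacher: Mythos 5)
Your plan has a genuine error in the upper-bound half: the claim that the symmetric standard-line LP has optimum $(j+2)(j+3)/4$, hence strictly exceeds $2j+1$ for $j\ge 3$, is false, and everything you build on it (non-standard lines of slope $\pm1,\pm1/2$, the inductive ring construction) is a wrong turn. Your family $w_k=(j+2-k)/6$ is feasible but not optimal: in the same symmetric framework the assignment $w_k=(2j-k)/(3j)$ for $0\le k\le 2j-1$ (and $w_k=0$ otherwise) is also feasible --- if all three barycentric coordinates of a point are at most $2j$ the covering weights sum to exactly $\frac{6j-(a+b+c)}{3j}=1$, while if some coordinate is at least $2j$ the other two already contribute $\frac{4j-(a+b)}{3j}\ge 1$ --- and its total weight is $\frac{3}{3j}\sum_{i=0}^{2j-1}(2j-i)=2j+1$ for every $j$. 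This is exactly the paper's construction: standard lines alone suffice, with linearly decreasing weights supported on the $2j$ levels nearest each edge (note the support has length $2j$, not $j+2$; widening the support while keeping the weights linear is what lets the total drop to $2j+1$). Incidentally, your fallback of adding total weight exactly $2$ to handle the outer ring when passing from $T_2(3j-2)$ to $T_2(3j+1)$ is also unsubstantiated: weight $2$ spread over the three bounding lines leaves mid-edge boundary points covered only $2/3$, so the inductive step would need interaction with the inner cover that you have not specified.

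Your lower-bound outline, by contrast, is the paper's approach in spirit --- weak LP duality with an $S_3$-symmetric fractional packing whose support and values are dictated by complementary slackness against the primal cover --- but it is missing the one idea that makes the ``all lines'' verification tractable. The paper supports the dual on $j$ concentric hexagons $X_1,\dots,X_j$ (the hexagon $X_i$ cut out by $x=j\pm i$, $y=j\pm i$, $x+y=2j\pm i$), assigning every point of $X_i$ mass $i/(j(j+1))$, for total mass $\sum_{i=1}^j 6i\cdot\frac{i}{j(j+1)}=2j+1$. The constraint $\sum_{p\in L}v(p)\le 1$ for \emph{every} line then needs no case analysis on slopes at all: any line not containing a side of some hexagon meets each hexagonal boundary in at most two points, giving mass at most $2\sum_{i=1}^j\frac{i}{j(j+1)}=1$, and a line containing a side of $X_s$ (necessarily standard) picks up $s+1$ points of $X_s$ plus two points of each $X_i$ with $i>s$, which again sums to exactly $1$. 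So the ``principal obstacle'' you identify dissolves once the dual is organized by convex level sets rather than by orbit formulas; no induction on $j$ or small-denominator slope analysis is needed on either side.
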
 

\begin{proof}
We begin by showing that $f^*(3j+1,2)\leq{2j+1}$. When $j=0$, $T_2(3j+1)$ consists of a single point, and we can cover it with one line. Suppose now that $j\geq{1}$, and note that $T_2(3j+1) = \{(x,y) \mid x,y\geq{0},x+y\leq{3j}\}$. Consider the fractional cover $\mathcal{C}$ where, for each $i \in \{0, \dots, 2j-1 \}$, the lines given by $x = i$, $y = i$ and $x+y = 3j-i$ have weight $(2j-i)/(3j)$, and  all other lines have weight 0.
    
To see that this is indeed a (fractional) cover, let $(a, b) \in T_2(3j+1)$. Note that at least one of $a$, $b$ is at most $2j-1$. Suppose, first, that both $a, b \leq 2j - 1$, then $(a,b)$ is contained in a vertical line with weight $(2j-a)/(3j)$ and a horizontal line with weight $(2j-b)/(3j)$, for a total weight of at least $(4j-a-b)/(3j)$. Hence, if $a+b \leq {j}$, then $(a, b)$ is covered by lines with total weight at least~1. On the other hand, if $a+ b > j$, then $(a,b)$ is contained in the line $x+y= a+b$ which has weight $(a + b - j)/(3j)$, for a total weight of $$\frac{4j-a-b}{3j} + \frac{a + b - j}{3j} = 1.$$

Suppose now, without loss of generality, that $a \leq{2j-1}$ and $b \geq{2j}$. Then, $(a,b)$ is contained in a vertical line with with weight $(2j-a)/(3j)$, and in a diagonal line with weight $(a+b-j)/(3j)$, for a total weight of $$\frac{2j-a}{3j} + \frac{a+b-j}{3j} = \frac{j+b}{3j} \geq{1}.$$
Hence, $\mathcal{C}$ is a fractional cover with total weight
\[ \frac{3}{3j} \sum_{i=0}^{2j-1}(2j-i) =2j+1. \]

We now establish the lower bound. For $j = 0$ this is straightforward. For $j\ge 1$, we assign nonnegative \emph{mass}, $m(p)$, to each point $p \in T_2(3j+1)$ such that the total mass assigned is $2j+1$. Then, showing that the set of points of $T_2(3j+1)$ contained in any line has total mass at most 1 implies the bound $f^*(3j+1, 2) \geq 2j+1$.

Our construction assigns a number of points a mass of 0. In particular, any point that is covered by lines of total weight more than 1 in $\mathcal{C}$ will be assigned a mass of 0.
Indeed, consider the quantity  $$ Q = \sum_{ (p, \ell)} w(\ell) m(p),$$
where the sum ranges over $(p, \ell)$ such that $p \in T_2(n)$, $\ell$ has positive weight (denoted by $w(\ell)$) in $\mathcal{C}$, and $p \in \ell$. Since every line contains points of total mass at most 1, we have $Q \leq 2j + 1$.
If there is a point with positive mass that is covered by lines of total weight more than $1$ in $\mathcal{C}$, then $$Q > \sum_{p\in T_2(3j+1)} m(p)= 2j+1,$$ a contradiction. On the other hand, since every point is contained in lines of total weight at least $1$, we have $Q \geq 2j+1$. This implies that implies that any line $\ell$ with $w(\ell) > 0$ contains points of mass exactly 1.

The points that receive positive mass can be partitioned into sets $X_1, \dots, X_j$ defined as follows. Fix $i \in \{1, \dots, j\}$, and note that the lines given by the equations ${x = j - i}$, ${x=j+i}$, ${y=j-i}$, ${y = j + i}$, ${x+y=2j-i}$, and $x+y=2j+i$ form a \emph{hexagon}. The set $X_i \subseteq T_2(3j+1)$ is defined to be the set of grid points contained on the boundary of this hexagon. See Figure~\ref{hexagonpic} for an illustration with $j = 3$. We refer to the sets $X_1, \dots X_j$ as \emph{hexagons}.

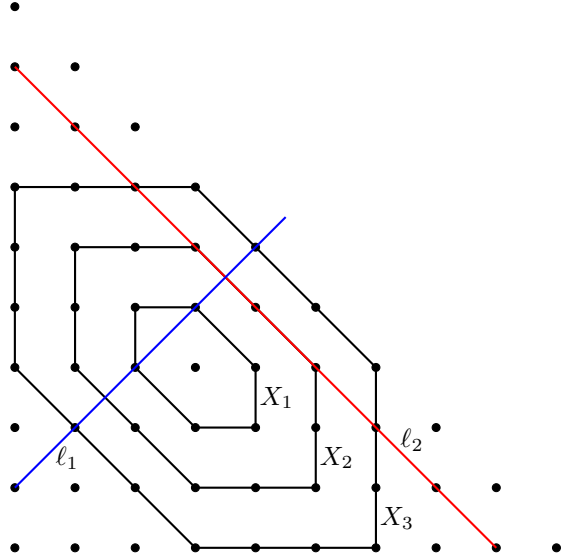
\begin{figure}[t]
    \centering
    \begin{tikzpicture}
[every node/.style={circle, draw=black!100, fill=black!100, inner sep=0pt, minimum size=3pt},
every edge/.style={draw, black!100, thick},
scale=0.8]

    \foreach \i in {0,...,9}{
        \pgfmathsetmacro{\z}{9-\i}
      \foreach \k in {0,...,\z}{
        \draw (\i,\k) node{};        
      };
      };

      \foreach \d in {1,2,3}{
      \pgfmathsetmacro{\m}{3-\d}
      \pgfmathsetmacro{\s}{3+\d}
      \draw[black!100,thick] (\m,\s) -- (\m,3) -- (3,\m) -- (\s,\m)-- (\s,3)--(3,\s)-- (\m,\s);
      };
      
      \node[draw=none,fill=none] at (6.35,0.5) {$X_3$};
      \node[draw=none,fill=none] at (5.35,1.5) {$X_2$};
      \node[draw=none,fill=none] at (4.35,2.5) {$X_1$};
      
      \draw[red!100,thick](0,8)--(8,0);
      \draw[blue!100,thick](0,1)--(4.5,5.5);
      
      \node[draw=none,fill=none] at (0.87,1.5) {$\ell_1$};
       \node[draw=none,fill=none] at (6.6,1.8) {$\ell_2$};

  \end{tikzpicture} \vspace{0.2in}\\

    \caption{Masses for $T_2(10)$ -- 
     Points in $X_1$ are assigned mass $1/12$, points in $X_2$ are assigned mass $1/6$, and points in $X_3$ are assigned mass $1/4$. All other points have mass $0$.}
    \label{hexagonpic}
\end{figure}

We impose the additional restriction that, for each hexagon, the points contained in are assigned equal mass. This implies that points of $X_j$ contained in lines given by $x = 0, y = 0,$ and $x + y = n - 1$ (and, hence, all points of $X_j$) receive a mass of $1/(j+1)$. We then successively assign mass to the points in $X_i$ for $i = j-1, \dots, 1$, while ensuring that the lines $x=j-i$, $y=j-i$, and $x+y=2j+i$ each contain points of total mass $1$. 
Following this strategy determines the following assignment of masses.

For each $i$, the points of $X_i$ are assigned a mass of $i/(j(j+1))$. All other points are assigned mass~$0$. See Figure~\ref{hexagonpic} for an illustration. Note that the hexagon $X_i$ has $i+1$ points along a side for a total of $6(i+1)-6 = 6i$ points. Thus, the total mass of all the points in $T_2(3j+1)$ is 
\[   \sum_{i=1}^j 6i\left(\frac{i}{j(j+1)}\right) = 2j+1.\] 

It remains to show that no line contains points of total mass more than $1$.  Suppose first that $\ell$ is a non-standard line, i.e., $\ell$ is not horizontal, vertical, nor has slope $-1$ (e.g., the line $\ell_1$ in Figure~\ref{hexagonpic}). Then $\ell$ can contain at most two points of each hexagon, implying that $\ell$ contains points with total mass at most
\[    2\sum_{i=1}^j \frac{i}{j(j+1)}=1. \]
Similarly, if $\ell$ is a standard line and does not contain a side of $X_i$ (e.g., if $(j+1, j+1) \in \ell$), then $\ell$ contains at most two points of each hexagon and so contains points with total mass at most 1. Finally, suppose $\ell$ contains a side of $X_s$ (e.g.,  the line $\ell_2$ in Figure~\ref{hexagonpic}). Then $\ell$ contains $s + 1$ points of $X_s$ and two points from $X_i$ for each $i > s$.  So $\ell$ contains points with total mass
\begin{align*}
(s+1)\left(\frac{s}{j(j+1)}\right)+2\sum_{i=s+1}^j\frac{i}{j(j+1)} = 2 \sum_{i=1}^j\frac{i}{j(j+1)} = 1.
\end{align*}
\end{proof}

\begin{theorem}\label{20mod3}
For all integers $j\geq{0}$,
\begin{enumerate}[label = (\alph*)]
    \item \label{itm:dim2-2mod3} $\displaystyle f^*(3j+2,2)=2j+1+\frac{2j+1}{3j+2}$,
    \item \label{itm:dim2-3mod3} $\displaystyle f^*(3j+3,2)=2j+2+\frac{j+1}{3j+4}$.
\end{enumerate}
\end{theorem}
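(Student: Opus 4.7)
My plan is to mirror the two-part structure of the proof of Theorem~\ref{1mod3}. For each of (a) and (b), I would exhibit a fractional cover supported on standard lines as the upper bound, and construct a matching dual mass assignment supported on nested hexagons (together with a modified innermost region) as the lower bound. Since the centroid of $T_2(n)$ is a lattice point only when $n \equiv 1 \pmod 3$, the essential new feature compared to Theorem~\ref{1mod3} is the need to adapt the construction near the center of the grid.

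For the upper bound I would assign positive weight to the lines $x = i$, $y = i$, and $x + y = n-1-i$ for $i \in \{0, 1, \dots, 2j\}$ in part (a) (respectively $i \in \{0, 1, \dots, 2j+1\}$ in part (b)), using weights $\alpha_i = (2j+1-i)/(3j+2)$ and $\alpha_i = (2j+2-i)/(3j+4)$ respectively. These weights are pinned down by demanding equality in the coverage inequality on a suitable ``bulk'' region; summing the arithmetic series $3 \sum_i \alpha_i$ then yields $(2j+1)(3j+3)/(3j+2)$ for (a) and $3(j+1)(2j+3)/(3j+4)$ for (b), which match the claimed values. To verify the coverage inequality at an arbitrary $(a, b) \in T_2(n)$, I would imitate the case analysis in Theorem~\ref{1mod3}, splitting on which of $a$, $b$, $n-1-a-b$ is largest and summing the contributions of the three standard lines through $(a, b)$.

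For the lower bound, I would construct the dual mass assignment on an $S_3$-symmetric family of concentric shells. In part (a), take the $j$ nested hexagons $X_1, \dots, X_j$ centered at $(j, j)$ as in Theorem~\ref{1mod3}, together with an innermost three-point ``anti-triangle'' $X_0 := \{(j,j), (j+1,j), (j,j+1)\}$; in part (b), take hexagons together with a suitably chosen innermost $S_3$-orbit near $(j+1, j+1)$. Assign a constant mass to each layer, determined recursively from the outside in by the requirement that every standard line containing a full side of some layer has total mass exactly $1$. This pins down the layer masses uniquely, and the telescoping sum of mass times layer-size should yield exactly the claimed values of $f^*(n, 2)$.

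The main obstacle will be verifying the dual feasibility condition that no line (standard or non-standard) contains points of total mass exceeding $1$. As in Theorem~\ref{1mod3}, non-standard lines meet each hexagon in at most two points, so their total contribution is bounded by $2 \sum_i m_i$, which by design equals $1$ via the outermost defining equation; standard lines not containing a side of any hexagon contribute at most two points per hexagon similarly. The delicate step is handling standard lines that interact with the asymmetric innermost region: the broken three-fold rotational symmetry means that not every such line automatically has mass $\le 1$, and a short additional case analysis will be required to confirm that the mass budget is not exceeded near $X_0$.
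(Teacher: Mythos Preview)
Your upper-bound constructions match the paper's exactly, and the overall strategy for the lower bound (LP duality via a mass assignment on nested hexagonal shells plus a modified core) is the right one. However, there is a genuine gap in the lower-bound plan.

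When $n\not\equiv 1\pmod 3$, the hexagons that surround the central triangle are forced to be \emph{non-equilateral}: for $n=3j+2$, the $i$-th shell $X_i$ has three ``short'' sides with $i+1$ points and three ``long'' sides with $i+2$ points. Now consider, for a fixed $s$, the standard line containing a long side of $X_s$ and the standard line containing the adjacent short side of $X_s$. These two lines meet every outer hexagon $X_{s+1},\dots,X_j$ in the same number of points (two each), but they meet $X_s$ itself in $s+2$ versus $s+1$ points. If every point of $X_s$ carries the same mass $m_s$, these two lines cannot both have total mass exactly $1$ unless $m_s=0$. So your recursion ``determine a constant mass per layer by forcing every standard side-line to have mass exactly $1$'' is overdetermined and has no nonzero solution. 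The paper resolves this by assigning \emph{two} distinct masses on each hexagon---one for the corner and short-side points, another for the interior points on the long sides---and the recursion then closes with the outermost short side carrying the leftover mass $(2j+1)/(3j+2)$ rather than $1$. This two-tier refinement is not a detail you can skip; without it the dual construction does not exist.

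A second, smaller issue: once the masses on a single hexagon are non-constant, your non-standard-line bound ``$2\sum_i m_i=1$'' no longer applies, since a non-standard line can pick up two of the \emph{larger} masses on each shell, and $2\sum_i\max(m_i)$ may exceed $1$. The paper handles this with a separate argument: it bounds from below the number of positive-mass points on an offending line, deduces that consecutive points differ by $(\pm1,\pm1)$, and then uses the affine $S_3$ symmetry of the construction to map a slope-$1$ line to one of slope $-2$ or $-1/2$, obtaining a contradiction. You should anticipate needing something of this sort rather than the simple hexagon-intersection bound.
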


\begin{proof}
We begin by establishing the upper bounds. Since the proof here is similar to that of the upper bound in Theorem~\ref{1mod3}, we only present the fractional cover and leave the verification as an exercise.

Let $n = 3j + 2$ with $j \geq 0$. The upper bound in~\ref{itm:dim2-2mod3} is obtained by considering the fractional cover~$\mathcal{C}$ where, for $i \in \{0,\dots, 2j \}$, the lines given by $x = i$, $y = i$, and $x+y = 3j + 1-i$ have weight $(2j + 1 - i)/(3j + 2)$, and all other lines have weight 0.

Suppose now that $n = 3j + 3$ with $j \geq 0$. The upper bound in~\ref{itm:dim2-3mod3} is obtained by considering the fractional cover where, for $i \in \{0, \dots, 2j+1 \}$, the lines given by $x = i$, $y = i$, and $x+y = 3j + 2 -i$ have weight $(2j + 2 - i)/(3j + 4)$, and all other lines have weight 0.

We now proceed to establish the lower bound. As in the proof of Theorem~\ref{1mod3}, we will assign nonnegative mass to each point of $T_2(n)$ such that no line contains points of total mass more than $1$. This implies that $f^*(n,2)$ is at least the sum of all masses. 
The proof here is more involved than that of Theorem~\ref{1mod3}. Hence, we give a detailed proof for $n = 3j+2$. For $n = 3j+3$, the proof is similar to that of $n = 3j+2$ and we only present the construction. 

\emph{Proof of lower bound in~\ref{itm:dim2-2mod3}:} Let $n = 3j + 2$. As in the proof of Theorem~\ref{1mod3}, any point of $T_2(n)$ covered by lines of total weight more than $1$ in the upper bound construction $\mathcal{C}$ has mass 0, and any line with positive weight in $\mathcal{C}$ contains points of total mass 1.

For $j=0$, a cover is easily obtained by assigning each point a mass of $1/2$. From here on, we assume $j\ge 1$. The points with positive mass can be partitioned into disjoint hexagons $X_1,\dots,X_j$, and a triangle~$Y$. The triangle $Y$ consists of the three points $(j,j), (j+1, j)$, and $(j+1, j)$. For $i \in \{1, \dots, j\}$, $X_i$ consists of points of $T_2(n)$ lying on the hexagon formed by the lines given by ${x=j-i}$, ${x=j+i+1}$, ${y=j-i}$, ${y=j+i+1}$,  ${x+y=2j-i}$, and ${x+y=2j+i+1}$. Note that the hexagons are not equilateral -- the hexagon $X_i$ has three \emph{short} sides containing $i+1$ points, and three \emph{long} sides containing $i+2$ points. We will refer to a point as a \emph{corner} point if it lies on both a long and short side, or an \emph{interior} point if it is not a corner point. See Figure~\ref{hexagonpic2} for an illustration with $j = 3$.

\begin{figure}
    \centering
    \begin{tikzpicture}
[every node/.style={circle, draw=black!100, fill=black!100, inner sep=0pt, minimum size=3pt}, every edge/.style={draw, black!100, thick}, scale=0.73]

    \foreach \i in {0,...,10}{
        \pgfmathsetmacro{\z}{10-\i}
      \foreach \k in {0,...,\z}{
        \draw (\i,\k) node{};
         check if (\i,\j) > (2,2) };};

      \draw[black!100, thick](3,3)--(4,3)--(3,4)--(3,3);
      
      \draw[blue!100,thick] (3,2) -- (5,2) -- (5,3) -- (3,5)-- (2,5)--(2,3)-- (3,2);
      \draw[red!100,thick](5,2)--(5,3);
       \draw[red!100,thick](3,5)--(2,5);
        \draw[red!100,thick](2,3)--(3,2);
      
      \draw[blue!100,thick] (3,1)--(6,1)--(6,3)--(3,6)--(1,6)--(1,3)--(3,1);
      \draw[red!100,thick](6,1)--(6,3);
       \draw[red!100,thick](3,6)--(1,6);
        \draw[red!100,thick](1,3)--(3,1);
      
      \draw[blue!100,thick] (3,0)--(7,0)--(7,3)--(3,7)--(0,7)--(0,3)--(3,0);
       \draw[red!100,thick](7,0)--(7,3);
       \draw[red!100,thick](3,7)--(0,7);
        \draw[red!100,thick](0,3)--(3,0);
 \node[draw=none,fill=none] at (7.4,0.5) {$X_3$};
      \node[draw=none,fill=none] at (6.4,1.5) {$X_2$};
      \node[draw=none,fill=none] at (5.4,2.5) {$X_1$};
      \node[draw=none,fill=none] at (4.3,3){$Y$};

  \end{tikzpicture} \vspace{0.2in}\\

    \caption{Masses for $T_2(11)$ --  corner points and interior points on the short sides (red) of $X_1, X_2,$ and $X_3$ have mass $5/44$, $2/11$, and $7/44$ respectively. Interior points on the long sides (blue) of $X_1, X_2,$ and $X_3$  have mass $1/11$, $7/44$, and $5/22$. Points on triangle $Y$ have mass $1/22$. All other points have mass $0$.}

\label{hexagonpic2}\end{figure}
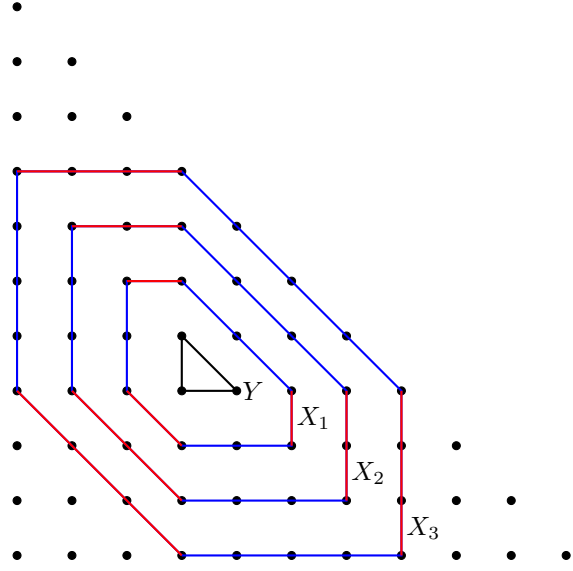

Recall that points not contained in hexagons or $Y$ have mass $0$ and that each line with positive weight in the fractional cover $\mathcal{C}$ contains points of total mass exactly $1$. With these requirements, we obtain that each standard line except for ones that contain a short side of $X_j$ contain points of total mass either $0$ or $1$. This forces the short sides of $X_j$ to receive the \emph{leftover mass}. Since $\mathcal{C}$ assigns positive weight to $2j+1$ standard lines in each direction, the short sides of $X_j$ each contain points of total mass $(2j+1)/(3j+2)$.

We first assign the corner and interior points on the short sides of $X_j$ equal mass so that the lines containing these sides contain points of total mass $(2j+1)/(3j+2)$. Then, we assign equal mass to the remaining points of $X_j$ (interior points on the long sides) so that the lines given by $x=0, y=0$, and $x+y=n-1$ each contain points of total mass $1$. 
We then successively assign mass to the points in $X_i$ for $i = j-1, \dots, 1$. For each $i$, we first assign equal mass to the points on the short sides of $X_i$ such that the lines containing the short sides of $X_i$ each contain points of total mass $1$. This then determines the total mass of the interior points on the long sides of $X_i$, which we distribute equally. Finally, once we have assigned mass to each hexagon, the requirement that each line with positive weight in the fractional cover $\mathcal{C}$ contains points of total mass 1 determines the masses of points in $Y$.

The strategy outlined above results in the following mass assignment: 
\begin{itemize}
    \item the corner points and interior points on the short sides of $X_j$ have mass $(2j+1)/((j+1)(3j+2))$ for a total mass of $$ 3(j+1)\left(\frac{2j+1}{(j+1)(3j+2)}\right) = \frac{3(2j+1)}{3j+2},$$
    \item for $1 \leq i \leq j-1$, the corner points and interior points on the short sides of $X_i$ have mass $(3i+2)/((j+1)(3j+2))$ for a total mass of
    $$ 3\sum_{i=1}^{j-1}(i+1)\left(\frac{3i+2}{(j+1)(3j+2)}\right) = \frac{3(j^3 + j^2 - 2)}{(j+1)(3j+2)},$$
    \item for $1 \leq i \leq j$, the interior points on the long sides of $X_i$ have mass $(3i+1)/((j+1)(3j+2))$ for a total mass of
    $$ 3\sum_{i=1}^{j}i\left(\frac{3i+1}{(j+1)(3j+2)}\right) = \frac{3j(j+1)}{(3j+2)},$$    
    \item points of $Y$ have mass $2/((j+1)(3j+2))$ for a total mass of
    $$\frac{6}{(j+1)(3j+2)}.$$    
\end{itemize}
The total mass of points in $T_2(3j+2)$ is
$$ \frac{3(2j+1)}{3j+2} + \frac{3(j^3 + j^2 - 2)}{(j+1)(3j+2)} + \frac{3j(j+1)}{3j+2} + \frac{6}{(j+1)(3j+2)} = 2j + 1 + \frac{2j+1}{3j+2}.$$

It remains to show that no line contains points of total mass more than $1$. Suppose first that $\ell$ is a standard line, i.e., $\ell$ is horizontal, vertical, or has slope $-1$. It suffices to consider the following cases:

\emph{Case} (i): $\ell$ contains the short side of a hexagon. If this hexagon is $X_j$, then $\ell$ contains exactly $j+1$ points of $T_2(n)$ for a total mass of
$$(j+1)\left(\frac{2j+1}{(j+1)(3j+2)}\right)<1.$$
Suppose now that $\ell$ contains the short side of $X_s$ with $s < j$. Then $\ell$ contains two interior points on the long sides of $X_i$ for each $i > s$. The total mass of points contained in $\ell$ is
$$ (s+1)\left(\frac{3s+2}{(j+1)(3j+2)}\right)+2\sum_{i=s+1}^j \frac{3i+1}{(j+1)(3j+2)} = 1.$$

\emph{Case} (ii): $\ell$ contains the long side of a hexagon. If this hexagon is $X_j$, then $\ell$ contains exactly $j+2$ points with positive mass, for a total mass of
$$ 2\left(\frac{2j+1}{(j+1)(3j+2)}\right)+j\left(\frac{3j+1}{(j+1)(3j+2)}\right) = 1.$$ 
Suppose now that $\ell$ contains the long side of $X_s$ with $s < j$. Then $\ell$ contains two interior points on the short sides of $X_i$ for each $i > s$. The total mass of points contained in $\ell$ is
$$    s\left(\frac{3s+1}{(j+1)(3j+2)}\right)+2\left(\sum_{i=s}^{j-1}\frac{3i+2}{(j+1)(3j+2)}\right)+2\left(\frac{2j+1}{(j+1)(3j+2)}\right) =1. $$
    
\emph{Case} (iii): $\ell$ contains a side of $Y$ and two corner points of $X_i$ for each $i$. The total mass of points contained in $\ell$ is:
    $$ 2\left(\frac{2}{(j+1)(3j+2)}\right)+2\left(\sum_{i=1}^{j-1}\frac{3i+2}{(j+1)(3j+2)}\right)+2\left(\frac{2j+1}{(j+1)(3j+2)}\right) = 1. $$
    
\emph{Case} (iv): $\ell$ contains one point of $Y$ and two interior points on the long sides of each hexagon. The total mass of points contained in $\ell$ is:
$$ \frac{2}{(j+1)(3j+2)}+2\left(\sum_{i=1}^j\frac{3i+1}{(j+1)(3j+2)}\right) =1.$$

We now assume that $\ell$ is a non-standard line. Suppose, for contradiction, that $\ell$ contains points of total mass more than~$1$. 
Let $p, q \in \ell$ be the points with largest mass. Note that the mass of each of $p, q$ is at most $M:=(3j+1)/((j+1)(3j+2))$, the largest mass assigned to any point. Furthermore, since the only points assigned a mass of $M$ lie on a long side of $X_j$, $\ell$ can contain at most two points with this mass. For $j>1$, every other mass is at most $(3j-1)/((j+1)(3j+2))$, so the number of points contained in $\ell$ with positive mass is at least 
$$   2 + \frac{1- 2\left((3j+1)/((j+1)(3j+2))\right)}{(3j-1)/((j+1)(3j+2))} \geq j+2. $$
For $j=1$, we have $2M<1$ and, hence, the number of points contained in $\ell$ with positive mass is at least $j+2$.

Suppose $(a, b), (a + m_1, b + m_2) \in T_2(n)$ with  $m_1, m_2$ non-zero integers are consecutive grid points on $\ell$. The number of rows of $T_2(n)$ with points of non-zero mass is $2j+2$. The same is true for columns of $T_2(n)$. A non-standard line can contain at most one point in any row or column. Since $\ell$ contains at least $j + 2$ points, this implies $|m_1|, |m_2| = 1$. Since $\ell$ is non-standard, it follows that $\ell$ has slope $1$.

Observe that the mass assignment is symmetric up to affine transformations of $\RR^2$ that map $T_2(n)$ to itself. Furthermore, there are affine transformations which map $\ell$ to a line $\ell'$ with slope $-2$ or $-1/2$. But, by the argument above, $\ell'$ contains points of total mass at most one. Thus, $\ell$ contains points of total mass at most one, a contradiction.

\emph{Proof of lower bound in~\ref{itm:dim2-3mod3}:} Let $n = 3j+3$. We assign mass to points of $T_2(n)$ for a total mass of $2j+2+(j+1)/(3j+4)$. The aim here is to be brief --- we simply describe the construction and leave verification as an exercise.

For $j=0$, a cover is easily obtained by assigning mass $1/4$ to the corners of $T_2(n)$ and mass $1/2$ to the remaining points. From here on, we assume $j\ge 1$. Any point that is covered by lines with total weight more than $1$ gets mass zero. The remaining points can be partitioned into disjoint hexagons $X_1,\dots,X_j$, and a triangle $Y$. The triangle $Y$ consists of the six points lying on the boundary of the triangle with vertices $(j,j)$, $(j,j+2)$, and $(j+2,j)$. The hexagon $X_i$ consists of the grid points of $T_2(n)$ on the hexagon formed by the lines $x = j-i, x=j+i+2, y=j-i, y=j+i+2, x+y=2j-i,$ and $x+y=2j+i+2$. Note that the hexagons are not equilateral -- the hexagon $X_i$ has three \emph{short} sides containing $i+1$ points, and three \emph{long} sides containing $i+3$ points. We will refer to a point as a \emph{corner} point if it lies on both a long and short side, or an \emph{interior} point if it is not a corner point. Corner and interior points of~$Y$ are defined in the obvious manner.
See Figure~\ref{hexagonpic3} for an illustration with $j = 3$.

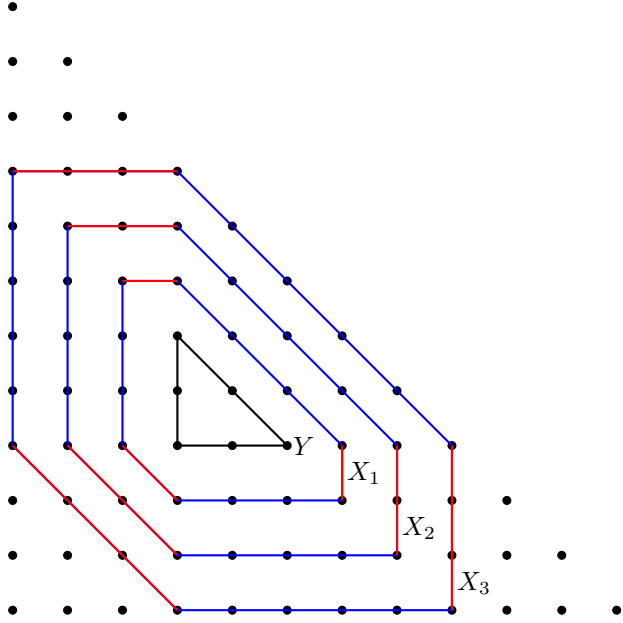
\begin{figure}
    \centering
    \begin{tikzpicture}
[every node/.style={circle, draw=black!100, fill=black!100, inner sep=0pt, minimum size=3pt}, every edge/.style={draw, black!100, thick}, scale=0.73]

    \foreach \i in {0,...,11}{
        \pgfmathsetmacro{\z}{11-\i}
      \foreach \k in {0,...,\z}{
        \draw (\i,\k) node{};
         check if (\i,\j) > (2,2) };};

      \draw[black!100, thick](3,3)--(5,3)--(3,5)--(3,3);
      
      \draw[blue!100,thick] (3,2) -- (6,2) -- (6,3) -- (3,6)-- (2,6)--(2,3)-- (3,2);
      \draw[red!100,thick](6,2)--(6,3);
       \draw[red!100,thick](3,6)--(2,6);
        \draw[red!100,thick](2,3)--(3,2);
      
      \draw[blue!100,thick] (3,1)--(7,1)--(7,3)--(3,7)--(1,7)--(1,3)--(3,1);
      \draw[red!100,thick](7,1)--(7,3);
       \draw[red!100,thick](3,7)--(1,7);
        \draw[red!100,thick](1,3)--(3,1);
      
      \draw[blue!100,thick] (3,0)--(8,0)--(8,3)--(3,8)--(0,8)--(0,3)--(3,0);
       \draw[red!100,thick](8,0)--(8,3);
       \draw[red!100,thick](3,8)--(0,8);
        \draw[red!100,thick](0,3)--(3,0);
 \node[draw=none,fill=none] at (8.4,0.5) {$X_3$};
      \node[draw=none,fill=none] at (7.4,1.5) {$X_2$};
      \node[draw=none,fill=none] at (6.4,2.5) {$X_1$};
      \node[draw=none,fill=none] at (5.3,3){$Y$};

  \end{tikzpicture} \vspace{0.2in}\\

    \caption{Masses for $T_2(12)$ --
    corner points and interior points on the short sides (red) of $X_1, X_2,$ and $X_3$ have mass $7/52$, $5/26$, and $1/13$ respectively. Interior points on the long sides (blue) of $X_1, X_2,$ and $X_3$  have mass $5/52$, $2/13$, and $11/52$. The corner points of $Y$ have mass $1/13$, and the interior points have mass $1/26$. All other points have mass $0$.}
    \label{hexagonpic3}\end{figure}

The mass is assigned as follows: 
\begin{itemize}
    \item the corner points and interior points on the short sides of $X_j$ have mass $1/(3j+4)$ for a total mass of
    $$ 3(j+1)\left(\frac{1}{3j+4}\right) = \frac{3j+3}{3j+4},$$
    \item for $1 \leq i \leq j-1$, the corner points and interior points on the short sides of $X_i$ have mass $(3i+4)/((j+1)(3j+4))$ for a total mass of
    $$ 3\sum_{i=1}^{j-1}(i+1)\left(\frac{3i+4}{(j+1)(3j+4)}\right) = \frac{3(j^3 + 2j^2 +j - 4)}{(j+1)(3j+4)},$$
    \item for $1 \leq i \leq j$, the interior points on the long sides of $X_i$ have mass $(3i+2)/((j+1)(3j+4))$ for a total mass of
    $$ 3\sum_{i=1}^{j}(i + 1)\left(\frac{3i+2}{(j+1)(3j+4)}\right) = \frac{3j(j^2 + 4j + 5)}{(j+1)(3j+4)},$$    
    \item the corner points of $Y$ have mass $4/((j+1)(3j+4))$ and the interior points have mass $2/((j+1)(3j+4))$ for a total mass of
    $$\frac{18}{(j+1)(3j+4)}.$$    
\end{itemize}
The total mass of points in $T_2(3j+3)$ is
$$ \frac{3j+3}{3j+4} + \frac{3(j^3 + 2j^2 +j - 4)}{(j+1)(3j+4)} + \frac{3j(j^2 + 4j + 5)}{(j+1)(3j+4)} + \frac{18}{(j+1)(3j+4)} = 2j + 2 + \frac{j+1}{3j+4}.$$
\end{proof}
The verification of the fact that each line contains points of total mass at most 1 follows similarly to the computation in the proof of Theorem~\ref{20mod3}~\ref{itm:dim2-2mod3}.

\subsection{Integer covering}\label{sec:intcovering}
The solution to the fractional problem gives an immediate lower bound on the integral problem:
\begin{equation}
\label{eq:fractionaltointegerbound}
f(n,2,k) \geq {kf^*(n,2)}.
\end{equation}
If $n$ is fixed and $k \rightarrow \infty$, then $f(n,2,k)=kf^*(n,2)+O_n(1)$ (see for example \cite{BBBD}, Proposition 1). On the other hand, for fixed $k$ as $n\to\infty$, we obtain via Theorems \ref{1mod3} and \ref{20mod3} that
$$f(n,2,k) \geq {kf^*(n,2)} = k\left(\frac{2n}{3}+O(1)\right)= \left(\frac{2k}{3}\right)n + O_k(1).$$

We show that this bound is not the best possible for $k\leq{4}$, and conjecture that this is the case for every value of $k$.

\begin{theorem}\label{2int}
    For all $n\geq{2}$,
\[f(n,2,k)= \begin{dcases}
    n &\mbox{if } k=1,\\
    \ceil{3n/2} &\mbox{if } k=2,\\
    \ceil{9n/4} &\mbox{if } k=3,\\
    3n &\mbox{if } k=4.
\end{dcases}
\]
\end{theorem}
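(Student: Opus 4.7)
For the upper bounds I would give explicit constructions using only standard lines with integer multiplicities mimicking the fractional optima $f^{*}(k, 2)$ from Theorems~\ref{1mod3} and~\ref{20mod3}. When $k = 1$, the $n$ lines $x = 0, 1, \ldots, n - 1$ suffice. When $k = 2$, taking $x = 0, \ldots, \lceil n/2 \rceil - 1$ together with the analogous $\lceil n/2 \rceil$ horizontal lines and the $\lfloor n/2 \rfloor$ diagonals $x + y = \lceil n/2 \rceil, \ldots, n - 1$ yields a $2$-cover of size $\lceil 3n/2 \rceil$; verification is a short case check on whether each coordinate of a point lies in the ``low'' or ``high'' half of its range. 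For $k = 3$ and $k = 4$ the construction is analogous but uses certain standard lines with multiplicity $2$ or $3$ so as to integerize the fractional solutions $f^{*}(3, 2) = 9/4$ and $f^{*}(4, 2) = 3$.

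For the lower bounds I would argue by induction on $n$, with case analysis on the multiplicities $\mu_x, \mu_y, \mu_d$ of the three bounding lines in a given $k$-cover $\mathcal{C}$ of size $m$; set $s := \mu_x + \mu_y + \mu_d$. \textbf{Case A} (some $\mu_{*} = 0$): if, say, $\mu_d = 0$, then the $n$ pairwise-collinear points on $x + y = n - 1$ each require $k$ distinct covering lines, so $m \geq kn \geq \lceil f^{*}(k, 2) n \rceil$ for $k \leq 4$. \textbf{Case B} (some $\mu_{*}$ is large): removing all copies of that bounding line leaves a $k$-cover of $T_2(n) \setminus \{\text{that side}\}$, which is affinely equivalent to $T_2(n - 1)$, so $m \geq \mu_{*} + f(n - 1, 2, k)$; the induction closes whenever $\mu_{*}$ is at least the ceiling jump $\lceil f^{*}(k, 2) n \rceil - \lceil f^{*}(k, 2)(n - 1) \rceil$, which is at most $3$ for $k \leq 4$. \textbf{Case C} (balanced, small $\mu_{*}$): I would count side incidences. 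Each of the $n$ points on side $y = 0$ needs $k - \mu_y$ covers beyond the bounding line $y = 0$; each of the other bounding lines contributes at most one incidence on this side (at a shared corner); and each non-bounding line supplies at most two such incidences across the three sides combined. Summing over the three sides yields $m \geq (3k - s) n / 2$.

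For $k = 1, 2, 4$ these three cases close the induction. The cleanest is $k = 4$: the ceiling jump is exactly $3$, so Case~B handles every $s \geq 7$ (where pigeonhole forces some $\mu_{*} \geq 3$), while Case~C yields $m \geq (12 - s) n / 2 \geq 3n$ for each $s \leq 6$.

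\paragraph{Main obstacle.} The delicate regime is $k = 3$ with $s \in \{5, 6\}$ when $n \equiv 1 \pmod 4$. There Case~C gives only $m \geq 2n < 9n/4$, while Case~B with $\mu_{*} = 2$ falls one short since the ceiling jump can equal $3$. To close this gap I would refine Case~C by further observing that after removing all three bounding lines the remaining $m - s$ lines must form a $k$-cover of the inner sub-triangle affinely equivalent to $T_2(n - 3)$, then combine this interior constraint with the side-incidence count via a secondary application of the induction. Working through the residue classes of $n \bmod 4$ in this borderline configuration is the most intricate part of the proof.
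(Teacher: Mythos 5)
Your upper-bound sketches and your Cases A and B are sound and match the paper's scaffolding (Case B is exactly Observation~\ref{boundary}), but the lower-bound side has two problems. The smaller one is in Case C: your claim that each non-bounding line supplies at most two side-incidences \emph{across the three sides combined} fails once corners are counted toward two sides --- a non-bounding line through the corner $(0,0)$ and a point of the side $x+y=n-1$ supplies three. With three per line, your inequality degrades from $m \geq (3k-s)n/2$ to roughly $m \geq s + \bigl(n(3k-s)-2s\bigr)/3$, which for $k=4$, $s=6$ gives only $m \geq 2n+2$, not $3n$ --- and $s=6$ is precisely the critical balanced case. The standard fix, which is what the paper does, is to count only the $n-2$ non-corner points of each side (these lie on no other bounding line), so each non-bounding line covers at most two of them and one gets $m \geq s + (n-2)(3k-s)/2$, which suffices.

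The genuine gap is at $k=3$, exactly where you flag the ``main obstacle,'' and your proposed repair does not close it. With $s=6$ and $n \equiv 1 \pmod 4$, removing the three bounding lines and applying induction to the interior copy of $T_2(n-3)$ gives $m \geq 6 + \lceil 9(n-3)/4 \rceil = \lceil 9n/4 \rceil - 1$: short by exactly one line. The side-incidence count cannot recover that line because the two constraints are not additive: the same collection of roughly $9n/4$ non-bounding lines can serve both, and indeed in the extremal construction every non-bounding standard line meets the boundary in exactly two points, so pure counting of boundary incidences plus interior induction is consistent with a cover of size $\lceil 9n/4\rceil - 1$. The missing idea --- and the paper's actual mechanism for $k=3$ --- is to track multiplicities of \emph{all} standard lines in each direction, not just the bounding ones: let $u$ (resp.\ $v$) be the least index such that the line $x=u$ has multiplicity at most one (resp.\ $x=v$ has multiplicity zero), so there are at least $u+v$ vertical lines; the $n-u$ points of $x=u$ each need two more covers and each non-vertical line supplies one, giving $|\mathcal{C}| \geq 2n-u+v$, and likewise $|\mathcal{C}| \geq 3n+u-2v$; combining yields $|\mathcal{C}| \geq (7n-u)/3$ and $|\mathcal{C}| \geq (5n-v)/2$, so $|\mathcal{C}| < 9n/4$ would force more than $3n/4$ lines in each of the three standard directions, a contradiction. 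Without an input of this strength, your planned ``secondary application of the induction'' through the residue classes of $n \bmod 4$ will not terminate.
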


\begin{obs}\label{boundary}
    If $H$ is a bounding hyperplane of $T_d(n)$, then $T_d(n) \setminus H$ is a copy of $T_d(n-1)$. Fix $d, k > 0$ and suppose there are constants $\alpha := \alpha_{d, k}$, $\beta := \beta_{d, k}$ such that $f(n-1,d,k) \geq \alpha(n-1)+ \beta$. Then any $k$-cover of $T_d(n)$ which contains a bounding hyperplane with multiplicity at least $\alpha$ has cardinality at least $\alpha n + \beta$. Thus, in proving a bound of the form $f(n,d,k) \geq \alpha n + \beta$ via induction on $n$, it suffices to consider $k$-covers in which each bounding hyperplane has multiplicity less than $\alpha$.
\end{obs}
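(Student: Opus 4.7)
The plan is to prove the observation in two natural steps. First, I would verify the identification $T_d(n)\setminus H \cong T_d(n-1)$ by writing down an explicit affine isomorphism. If $H$ is the bounding hyperplane $\{x_i = 0\}$ for some $i\in\{1,\dots,d\}$, then shifting the $i$-th coordinate by $-1$ sends $T_d(n) \setminus H$ bijectively onto $\{(x_1,\dots,x_d)\in\ZZ_{\geq 0}^d : x_1+\cdots+x_d \leq n-2\} = T_d(n-1)$. If instead $H = \{x_1+\cdots+x_d = n-1\}$, then $T_d(n)\setminus H$ equals $T_d(n-1)$ directly by the definition of the triangular grid. Since affine isomorphisms send hyperplanes bijectively to hyperplanes, $k$-covers transport between $T_d(n)\setminus H$ and $T_d(n-1)$ in a cardinality-preserving fashion.

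For the main claim, suppose $\mathcal{M}$ is a $k$-cover of $T_d(n)$ in which the bounding hyperplane $H$ appears at least $\alpha$ times. The key move is to delete exactly $\alpha$ copies of $H$ from $\mathcal{M}$, producing a multiset $\mathcal{M}'$ of hyperplanes with $|\mathcal{M}'| = |\mathcal{M}| - \alpha$. Since no point of $T_d(n)\setminus H$ lies on $H$, the covering multiplicity of every such point is unchanged by the deletion, so $\mathcal{M}'$ still $k$-covers $T_d(n)\setminus H$. By the first step this set is an affine copy of $T_d(n-1)$, so the assumed lower bound yields $|\mathcal{M}'| \geq f(n-1,d,k) \geq \alpha(n-1) + \beta$, and therefore $|\mathcal{M}| \geq \alpha n + \beta$.

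The closing clause about reduction in inductive proofs is then an immediate reformulation: in an inductive argument aiming at $f(n,d,k) \geq \alpha n + \beta$, any $k$-cover in which some bounding hyperplane appears with multiplicity at least $\alpha$ is already seen to satisfy the target bound, so one may restrict attention to $k$-covers in which every bounding hyperplane is used strictly fewer than $\alpha$ times. I do not anticipate any real obstacle to carrying out this plan, since the observation is essentially a bookkeeping fact; the only mild care needed is to check the affine equivalence uniformly across all $d+1$ bounding hyperplanes and to confirm that the deletion step is well-defined precisely because of the hypothesis that $H$ appears with multiplicity $\geq \alpha$.
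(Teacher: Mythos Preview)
The paper states this observation without proof, treating it as self-evident; your write-up supplies exactly the natural argument and is correct in substance.

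One small technical slip: you propose to ``delete exactly $\alpha$ copies of $H$'', but $\alpha$ need not be an integer (for instance, the paper applies the observation with $\alpha = 3/2$ when $k=2$, concluding that bounding lines may be assumed to have multiplicity at most one). The fix is immediate: if $H$ has multiplicity $m \geq \alpha$ in $\mathcal{M}$, remove all $m$ copies. The remaining multiset still $k$-covers $T_d(n)\setminus H \cong T_d(n-1)$, so $|\mathcal{M}| - m \geq f(n-1,d,k) \geq \alpha(n-1)+\beta$, whence $|\mathcal{M}| \geq m + \alpha(n-1) + \beta \geq \alpha n + \beta$. With this adjustment your plan goes through without difficulty.
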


\begin{proof}[Proof of Theorem \ref{2int}]

We begin by showing the lower bounds. For each $k$, the base case of $n = 2$ follows immediately from~\eqref{eq:fractionaltointegerbound}. Suppose now that $n > 2$. 

For $k \in \{1, 2, 4\}$, our proof relies on the following idea: if a line is not a bounding line, then it can cover at most two points contained in the bounding lines. Hence, any cover must either contain bounding lines with high multiplicity, or a large number of non-bounding lines. We present these proofs first. Unfortunately, this method fails to give an optimal bound for $k = 3$. Here, our argument is somewhat more involved and takes into account the multiplicity of other standard lines. In particular, we consider the smallest non-negative integer $c$ such that the line given by  $x_1 = c$ has low multiplicity in the cover. If $c$ is large, many lines parallel to $x_1 = 0$ have high multiplicity and the cover is large. On the other hand,  if $c$ is small, many lines are needed to cover the points of $x_1=c$ again implying that the cover is large.  We note that variations of both arguments are used repeatedly in Section~\ref{sec:highdim}.

\emph{The case $k = 1$:} Since we want to show $f(n, 2, 1) \geq n$, by Observation \ref{boundary}, it suffices to consider covers of $T_2(n)$ that do not contain any bounding lines. Let $H$ be the line given by $x = 0$. Any cover must contain a distinct line for each point in $T_2(n) \cap H$, implying that $f(n,2,1)\geq{n}$.

\emph{The case $k = 2$:} 
Recall that we want to show $f(n, 2, 2) \geq \ceil{3n/2}$. For $n=3$, the desired result again follows immediately from~\eqref{eq:fractionaltointegerbound}. By Observation \ref{boundary}, it suffices to consider 2-covers of $T_2(n)$ where the multiplicity of each bounding line is at most one.

When $n\geq{4}$, let $\mathcal{C}$ be a 2-cover of $T_2(n)$ where exactly $m$ bounding lines have multiplicity exactly one, where $0 \leq{m}\leq{3}$. Then $3 - m$ bounding lines have multiplicity zero. A bounding line contains $n-2$ points that are not covered by other bounding lines. In the union of the faces of $T_2(n)$, there are at least $m(n-2)$ points that need to be covered one additional time and $(3-m)(n-2)$ points that need to be covered two additional times. Since no line (that is not a bounding line) can cover more than two of these points, we obtain
$$ |\mathcal{C}| \geq m + \frac{m(n-2) + 2(3-m)(n-2)}{2} = \frac{3n}{2}+\frac{(3-m)(n-4)}{2}.$$
Since $|\mathcal{C}|$ is an integer, we obtain $f(n, 2, 2) \geq \ceil{3n/2}$.

\emph{The case $k = 4$:} 
Recall that we want to show $f(n, 2, 4) \geq 3n$. For $n\in\{3,4,5\}$, the result follows immediately from~\eqref{eq:fractionaltointegerbound}. By Observation \ref{boundary}, it suffices to consider 4-covers where the multiplicity of each bounding line is at most two.
Let $n\geq{6}$ and $\mathcal{C}$ be a 4-cover of $T_2(n)$ where $m$ bounding lines have multiplicity exactly two, where $0\leq{m}\leq{3}$. Then $3-m$ bounding lines have multiplicity at most one. In the union of the faces of $T_2(n)$, there are at least $m(n-2)$ points that need to be covered two additional times and $(3-m)(n-2)$ points that need to be covered at least three additional times.
Since no line (that is not a bounding line) can cover more than two of these points, we obtain 
$$ |\mathcal{C}| \geq {2m + \frac{2m(n -2)+3(3-m)(n-2)}{2}} = 3n+\frac{(3-m)(n-6)}{2},$$
implying that $f(n,2,4)\geq{3n}$. 

\emph{The case $k=3$:}
Let $s$ (resp. $t$) be the smallest nonnegative integer such that the line given by $x = s$ (resp. $x = t$) has multiplicity at most one (resp. has multiplicity zero). Then the number of vertical lines (with multiplicity) is at least $2s + (t-s) = s+t$. Each of the $n-s$ points on the line given by $x = s$ must be covered at least an additional two times, and any non-vertical line can cover exactly one of these. It follows that
\begin{equation}
\label{eq:d2k3eq1}
|\mathcal{C}| \geq s + t + 2(n-s) = 2n - s + t.
\end{equation}
Similarly, each of the $n-t$ points on the line given by $x = t$ must be covered three times, and any non-vertical line can cover exactly one of these. This gives 
\begin{equation}
\label{eq:d2k3eq2}
|\mathcal{C}| \geq s + t + 3(n-t) = 3n+s - 2t.
\end{equation}
Combining \eqref{eq:d2k3eq1} and \eqref{eq:d2k3eq2}, we obtain
$$|\mathcal{C}| \geq \frac{7n - s}{3} \quad \text{ and }\quad |\mathcal{C}| \geq \frac{5n-t}{2}.$$
Suppose, for contradiction, that 
$|\mathcal{C}| < 9n/4$. Then $s > n/4$ and $t > n/2$, implying that there are more than $3n/4$ vertical lines (with multiplicity) in $\mathcal{C}$. By the same argument, the number of horizontal and diagonal standard lines is also more than $3n/4$. But then $|\mathcal{C}| > 3(3n/4) = 9n/4$, a contradiction. Since $|\mathcal{C}|$ is an integer, we obtain $f(n, 2, 3) \geq \ceil{9n/4}$.

We now present constructions which imply corresponding upper bounds. 

\emph{The case $k = 1$:} A cover of size $n$ is easily obtained by considering the lines given by  $x = i$, for $i\in\{0,\dots,n-1\}$.

\emph{The case $k = 2$:} Our construction depends on the parity of $n$.

For even $n$, the cover consists of the lines given by $x = i$, $y = i$, and $x+y = n - 1 - i$, for $i \in \{0,\dots, n/2-1 \}$.

For odd $n$, the cover consists of the lines given by $x = i$, $y = i$,  for $i \in \{0, \dots, (n-1)/2 \}$ and $x+y = n - 1 - i$, for $i \in \{0, \dots, (n-3)/2 \}$.

\emph{The case $k = 3$:}  The cover contains:
\begin{itemize}
    \item with multiplicity two, the lines given by $x=i$ and $y = i$, for $i\in\{0,\dots,\floor{\frac{n-2}{4}}\}$,
    \item with multiplicity one, the lines given by $x=i$ and $y = i$, for $i\in\{\floor{\frac{n-2}{4}}+1,\dots,   \floor{\frac{n-1}{2}}\}$,
    \item with multiplicity two, the lines given by $x+y=n-1-i$ for $i\in\{0,\dots,\floor{\frac{n}{4}}-1\}$,
    \item with multiplicity one, the lines given by $x+y=n-1-i$ for
    $$ i \in
    \begin{cases}
    \{\floor{\frac{n}{4}},\dots,\floor{\frac{n}{2}}\} & n\equiv 1\pmod{4},\\
    \{\floor{\frac{n}{4}},\dots,\floor{\frac{n}{2}}-1\} & \text{otherwise}.
    \end{cases}$$
\end{itemize}

\emph{The case $k = 4$:} 
The cover contains:
\begin{itemize}
    \item with multiplicity two, the lines given by $x=i, y = i$, and $x+y=n-1-i$ for $i\in\{0,\dots,\floor{\frac{n-1}{3}}\}$,
    \item with multiplicity one, the lines given by $x=i, y = i$, and $x+y=n-1-i$ for $i\in\{\floor{\frac{n-1}{3}}+1,\dots,\floor{\frac{2n}{3}}-1\}$.
\end{itemize}
Here, we verify the upper bound when $k=4$. We omit the verification for other values of $k$, which proceed similarly.

In each of the three standard directions, there are $\floor{\frac{n-1}{3}}+1$ lines with multiplicity two and ${\floor{\frac{2n}{3}}-\floor{\frac{n-1}{3}}-1}$ lines with multiplicity one. Note that this is still true for $n=2, 4$, where ${\floor{\frac{2n}{3}}-1<\floor{\frac{n-1}{3}}+1}$ and no line has multiplicity exactly one. Hence the total number of lines is
$$3\left[2\left(\floor{\frac{n-1}{3}}+1\right)+\left(\floor{\frac{2n}{3}}-\floor{\frac{n-1}{3}}-1\right)\right]=3\left(\floor{\frac{2n}{3}}+\floor{\frac{n-1}{3}}+1\right)=3n.$$

It remains to verify that each point $(a,b)\in T_2(n)$ is covered at least four times. Suppose, without loss of generality, that $a\leq{b}$. 

If $a\geq{\floor{\frac{n-1}{3}}+1}$ and $b>\floor{\frac{2n}{3}}-1$, then $a+b>n-1$, a contradiction.

If $\floor{\frac{n-1}{3}}+1\leq{a,b}\leq{\floor{\frac{2n}{3}}-1}$, then $(a,b)$ is covered twice by vertical and horizontal lines. Note also that $a+b\geq{2\floor{\frac{n-1}{3}}+2}\geq{n-1-\floor{\frac{n-1}{3}}}$, so $(a,b)$ is covered an additional two times by diagonal lines.

Suppose now that $a\leq{\floor{\frac{n-1}{3}}}$. If $b\leq{\floor{\frac{n-1}{3}}}$, then $(a,b)$ is covered four times by vertical or horizontal lines. If $\floor{\frac{n-1}{3}}+1\leq{b}\leq{\floor{\frac{2n}{3}}}-1$, then $(a,b)$ is covered three times by vertical or horizontal lines, and once by a diagonal line since $a+b\geq{\floor{\frac{n-1}{3}}+1}\geq{n-\floor{\frac{2n}{3}}}$. Finally, if $b > \floor{\frac{2n}{3}}-1$, then $(a,b)$ is covered twice by vertical or horizontal lines, and twice by a diagonal line since  $a+b\geq{\floor{\frac{2n}{3}}}\geq{n-1-\floor{\frac{n-1}{3}}}$.
Thus, all points are covered at least four times.
\end{proof}

The values determined in Theorem \ref{2int} along with computational evidence for $k \leq 7$ suggest a connection between the fractional covering problem for $T_2(k)$ and the integer $k$-covering problem for a triangular grid of any size:
\begin{conjecture}\label{duality}
$f(n,2,k)=f^*(k,2)n+O_k(1)$.
\end{conjecture}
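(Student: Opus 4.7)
My plan is to prove the matching lower bound $f(n,2,k) \geq f^*(k,2) n - O_k(1)$, since the upper bound is Proposition~\ref{upper}. The LP relaxation alone only yields $f(n,2,k) \geq k f^*(n,2) = (2k/3) n + O_k(1)$, and by Theorems~\ref{1mod3}~and~\ref{20mod3} the quantity $f^*(k,2) - 2k/3$ is a positive number of magnitude at least $1/6$ (tending to $1/3$ as $k \to \infty$). So the conjecture asserts an integrality gap of order $n$ that cannot be detected by the LP and must be extracted from the integrality of the cover.

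The strategy is to generalize the argument used for $k = 3$ in the proof of Theorem~\ref{2int}. Given a $k$-cover $\mathcal{C}$ and each $j \in \{1, \ldots, k\}$, let $s_j^{(v)}$ denote the smallest $i$ such that the vertical line $x = i$ has multiplicity at most $k - j$ in $\mathcal{C}$. Since the multiplicity $a_i$ of $x = i$ satisfies $a_i \geq k - j + 1$ for all $i < s_j^{(v)}$, the total multiplicity of vertical lines is at least $\sum_{j=1}^k s_j^{(v)}$. Moreover, each of the $n - s_j^{(v)}$ points on the line $x = s_j^{(v)}$ must be covered at least $j$ additional times by non-vertical lines, and each non-vertical line can cover at most one point of a given vertical line. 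This yields the $k$ inequalities
\[
|\mathcal{C}| \geq \sum_{j'=1}^k s_{j'}^{(v)} + j\bigl(n - s_j^{(v)}\bigr) \quad \text{for } j = 1, \ldots, k,
\]
and analogous inequalities with $s_j^{(h)}$ and $s_j^{(d)}$ from the horizontal and diagonal directions. The plan is to combine these $3k$ inequalities via linear programming duality, with the expectation that the optimal dual weights recover the hexagonal mass assignments constructed in the proofs of Theorems~\ref{1mod3}~and~\ref{20mod3}, giving $|\mathcal{C}| \geq f^*(k,2) n - O_k(1)$.

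The principal obstacle is twofold. First, to extract the tight constant $f^*(k,2)$, one must combine the bounds across all three standard directions in a symmetry-respecting way and verify that the resulting LP dual value is exactly $f^*(k,2) n$; a direct combination using only a single direction already falls short (for $k = 3$ it gives only $13n/6$), so a "cross-direction" balancing along the lines of the case split in the $k = 3$ proof appears essential, and making this precise for general $k$ is a substantial combinatorial task. Second, these inequalities count only the contribution of standard lines toward covering a given line; non-standard lines must be handled separately. Here I plan to invoke Theorem~\ref{partialpr}, which proves the conjecture assuming only admissible (presumably standard) lines, and then argue via a local exchange that any non-standard line in $\mathcal{C}$ can be replaced by a bounded number of standard lines without increasing $|\mathcal{C}|$ by more than $O_k(1)$. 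The difficulty is showing that non-standard lines are genuinely "grid-inefficient" for the triangular geometry, which should follow from the fact that the extremal hexagonal mass assignment naturally saturates only standard lines; however, quantifying this loss to match the $O_k(1)$ error term in the conjecture is expected to be the hardest part of the argument.
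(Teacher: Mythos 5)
You are attempting to prove Conjecture~\ref{duality}, which the paper explicitly leaves open: it establishes only the upper bound (Proposition~\ref{upper}) and the lower bound under the two extra hypotheses of Theorem~\ref{partialpr} (all lines standard \emph{and} multiplicities monotone in the number of grid points on the line). Judged as an attempt to close the gap, your plan fails at its first step: the family of $3k$ per-direction inequalities is provably too weak already for $k=4$, where $f^*(4,2)=3$. Take all three directions identical, use no non-standard lines, and (up to rounding) set $\sigma:=18n/19$, $s_1^{(v)}=0$, $s_2^{(v)}=n/19$, $s_3^{(v)}=7n/19$, $s_4^{(v)}=10n/19$, realized by giving $x=i$ multiplicity $3$ for $i<s_2^{(v)}$, multiplicity $2$ for $s_2^{(v)}\le i<s_3^{(v)}$, and multiplicity $1$ for $s_3^{(v)}\le i<s_4^{(v)}$; do the same in the horizontal and diagonal directions. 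Then $\sum_{j}s_j^{(v)}=\sigma$, each direction uses exactly $\sigma$ lines, and $j\bigl(n-s_j^{(v)}\bigr)\le 2\sigma$ for every $j\in\{1,2,3,4\}$ (with equality for $j\ge 2$), so all of your inequalities are satisfied with $|\mathcal{C}|=3\sigma=54n/19<3n$. Hence no dual combination of your constraints, however cleverly balanced across the three directions, can reach $f^*(4,2)n$; the hexagonal masses cannot be ``recovered'' as dual weights because the information they encode is absent from your constraint set. (This is consistent with the paper itself: the $s,t$ argument is used only for $k=3$, and the $k=4$ case of Theorem~\ref{2int} is proved by a different, face-counting argument; $k=3$ is the exceptional case where per-direction inequalities suffice.) What is missing are \emph{cross-direction} constraints --- in the notation of Theorem~\ref{partialpr}, $\alpha_r+\beta_s+\gamma_t\ge n$ for $r+s+t=2k-2$, which under monotonicity reads $s_{r+1}^{(v)}+s_{s+1}^{(h)}+s_{t+1}^{(d)}\ge n$ --- and the witness above violates exactly this: $s_4+s_2+s_3=18n/19<n$, reflecting that the point $(10n/19,\,n/19)$ would be covered only $0+2+1=3<4$ times. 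Deriving such coupled constraints for an \emph{arbitrary} cover is precisely the open difficulty: an under-covered point on the critical line $x=s_j^{(v)}$ constrains only the single diagonal through that point; to force a linear number of diagonals to have high multiplicity one needs every diagonal $x+y=c$ with $c$ large to pass through some point with small vertical-plus-horizontal coverage, and that is exactly where the monotonicity hypothesis of Theorem~\ref{partialpr} enters.

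Your fallback --- reducing to Theorem~\ref{partialpr} by locally exchanging non-standard lines for standard ones --- also does not work as sketched, for two reasons. A loss of $O_k(1)$ per exchanged line is fatal when the cover contains $\Theta(n)$ non-standard lines; you would need an exchange that is lossless up to a \emph{total} additive $O_k(1)$, which is essentially as hard as the conjecture itself. And even a cover consisting entirely of standard lines need not satisfy the monotonicity hypothesis of Theorem~\ref{partialpr}, so standardization alone does not place you in the regime where that theorem applies. Note finally that your diagnosis of the role of non-standard lines is slightly off: your inequalities already account for them correctly, since any non-vertical line, standard or not, meets a given vertical line in at most one point; the failure mode is not unaccounted lines but the structural weakness, demonstrated above, of the per-direction constraints themselves.
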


For the remainder of this section, we provide a partial resolution to Conjecture~\ref{duality}. In Proposition~\ref{upper}, we show that the upper bound on $f(n,2,k)$ always holds. In Theorem~\ref{partialpr}, we prove the lower bound holds under certain natural assumptions on the lines contained in the $k$-cover.
\begin{proposition}\label{upper}
$f(n,2,k) \leq f^*(k,2)n + O_k(1)$.
\end{proposition}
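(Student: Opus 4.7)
The plan is to construct an integer $k$-cover of $T_2(n)$ of size at most $f^*(k,2)n + O_k(1)$ by ``blowing up'' an optimal fractional cover of $T_2(k)$ and then rounding to integer multiplicities.

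First, I would take an optimal fractional cover $\mathcal{C}^*$ of $T_2(k)$. By Theorems~\ref{1mod3} and~\ref{20mod3} this cover may be chosen to be supported only on standard lines, and by averaging over the dihedral symmetry of $T_2(k)$ one may further assume $\mathcal{C}^*$ assigns the same weight $w_c$ to each of the three lines $x=c$, $y=c$, and $x+y=k-1-c$. In the explicit covers of those theorems the weights $w_c$ are non-increasing in $c$ and rational with a common denominator $D$.

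Next, enlarging $n$ to the next multiple of $kD$ costs $O_k(1)$ and passes to a supergrid $T_2(n')\supseteq T_2(n)$, so it suffices to treat the case $kD\mid n$. In this case I would ``stretch'' $\mathcal{C}^*$ to $T_2(n)$ by scaling positions by a factor $n/k$: for each $c\in\{0,\ldots,k-1\}$, the $n/k$ lines $x=c(n/k), x=c(n/k)+1,\ldots,x=(c+1)(n/k)-1$ collectively receive total multiplicity $nw_c$ (an integer because $D\mid n$), split so that each individual line receives either $\lfloor kw_c\rfloor$ or $\lceil kw_c\rceil$, and similarly for horizontal and diagonal directions. The total number of lines is then $3\sum_c nw_c = nf^*(k,2)$, matching the desired bound. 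To check this really is a $k$-cover, I would fix $(a,b)\in T_2(n)$ and set $a'=\lfloor ak/n\rfloor$, $b'=\lfloor bk/n\rfloor$, $c_\delta=\lfloor(a+b)k/n\rfloor$. Since $a'+b'\le(a+b)k/n\le k-1$, the pair $(a',b')$ belongs to $T_2(k)$ and the constraint of $\mathcal{C}^*$ gives $w_{a'}+w_{b'}+w_{k-1-a'-b'}\ge 1$; because $c_\delta\ge a'+b'$ and $w$ is non-increasing, $w_{k-1-c_\delta}\ge w_{k-1-a'-b'}$, so in the fractional limit the three multiplicities through $(a,b)$ sum to at least $k(w_{a'}+w_{b'}+w_{k-1-c_\delta})\ge k$.

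The hard part I expect is ensuring that the integer rounding preserves this inequality \emph{pointwise}. Three floors can easily sum to less than $k$ (for example when each $kw_c=7/3$ with $k=7$), so one cannot round each multiplicity independently without incurring an $O(n)$ overhead. The fix should be a \emph{dependent} rounding scheme that coordinates the choice of ceilings across the three directional families, exploiting the three-fold cyclic symmetry of $T_2(k)$ extended to the blow-up, so that every integer point of $T_2(n)$ sees enough ceilings among its three incident translates. Verifying that such a coordinated rounding (e.g., one where the ``extra'' ceiling positions in each family form an aligned arithmetic-progression pattern) can always be arranged while only the $O_k(1)$ padding contributes to the additive error is the technical core of the argument.
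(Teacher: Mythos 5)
There is a genuine gap, and it is exactly the one you flag yourself. Your blow-up skeleton matches the paper's: it also lifts the explicit fractional covers of Theorems~\ref{1mod3} and~\ref{20mod3} to $T_2(n)$ by replacing each positively weighted standard line with a block of consecutive parallel lines of constant integer multiplicity, non-increasing away from the boundary. But because you scale positions by $n/k$ and multiplicities by $k$, the target multiplicity $kw_c$ need not be an integer, and as your own example shows, independent rounding can violate the pointwise constraint at $\Theta(n)$ points, not $O_k(1)$ of them. The ``dependent rounding scheme'' you invoke to fix this is never constructed, and since you yourself identify it as the technical core, the proof is incomplete as it stands.

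The missing observation, which makes the rounding problem vanish entirely, is that the optimal fractional covers of Theorems~\ref{1mod3} and~\ref{20mod3} have all weights over a \emph{common denominator} $M \in \{3j,\, 3j+2,\, 3j+4\}$ (for $k = 3j+r$ with $r \in \{1,2,3\}$ and $j = \lfloor (k-1)/3 \rfloor$): the $u$-th heaviest line in each direction has weight exactly $(k-j-u)/M$. The paper accordingly uses $N = k-j-1$ blocks of $\lceil n/M \rceil$ consecutive lines per direction and assigns every line in block $B_u$ multiplicity exactly $k-j-u$ --- an integer, so no rounding ever occurs --- and the total size is $\frac{3N(k-j)}{2M}n + O_k(1) = f^*(k,2)n + O_k(1)$. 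One caution if you repair your argument along these lines: you cannot then certify $k$-coverage merely by scaling the LP feasibility constraint, because $M = k-1 < k$ when $r = 1$, so a point fractionally covered with total weight exactly $1$ would only inherit multiplicity $M$. The paper instead verifies coverage by direct index arithmetic: if $x=a$ lies in $B_s$ and $y=b$ in $B_t$, these contribute multiplicity $2k-2j-(s+t)$, and when $s+t \geq k-2j+1$ the inequality $(s+t-2)\lceil n/M \rceil \leq a+b \leq n-1$ forces the diagonal $x+y=a+b$ into a block of multiplicity at least $2j-k+s+t$, summing to exactly $k$. Your monotonicity observation is the fractional shadow of this last step, but it only certifies the LP inequality, not the integral one you need.
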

\begin{proof}
When $k=1$, the assertion follows from Theorem \ref{2int}. Hence, we assume that $k > 1$. 

We first give an overview of the proof idea. The $k$-cover $\mathcal{C}$ of $T_2(n)$ given below is obtained by \emph{lifting} the fractional covers given in the proofs of Theorems~\ref{1mod3} and \ref{20mod3}. More specifically, we group the standard lines of $T_2(n)$ into sets $B_1, \dots, B_N$, each consisting of $\ceil{\frac{n}{M}}$ consecutive standard lines in each direction (the values of $N$ and $M$ are specified below). Roughly, each set $B_i$ corresponds to a standard line of $T_2(k)$ with positive weight in the fractional cover. This weight determines the multiplicity in $\mathcal{C}$ of each line in $B_i$ (so all lines in $B_i$ have the same multiplicity). 

Let $j = \floor{(k-1)/3}$, so that $k = 3j + r$ with $r \in \{1, 2, 3\}$. Set $N := k-j-1$ and $M := 2k-3j-2$. Note that $N, M > 0$ since $k > 1$.
For each $u \in \{1, \dots, N\}$, let $B_u$ be the set of lines given by $x = i, y = i$, and $x + y = n - 1 - i$ with $i \in \{(u-1)\ceil{\frac{n}{M}}, \dots,  u\ceil{\frac{n}{M}} - 1\}$. When $n$ is sufficiently large compared to $k$, the sets $B_u$ contain only standard lines. To explain the choice of $N$ and $M$, we note that
$$ N = \begin{dcases}
2j & \mbox{if } r = 1,\\
2j + 1 & \mbox{if } r = 2,\\
2j + 2 & \mbox{if } r = 3,\\
\end{dcases} \quad \text{and} \quad M = \begin{dcases}
3j & \mbox{if } r = 1,\\
3j + 2 & \mbox{if } r = 2,\\
3j + 4 & \mbox{if } r = 3.\\
\end{dcases} $$
So, $N$ is precisely the number of standard lines in each direction with positive weight in the fractional covers given in the proofs of Theorems~\ref{1mod3} and \ref{20mod3}, and $M$ is the denominator in the weight assigned to each line in the same fractional cover.

Consider the $k$-cover $\mathcal{C}$ which contains, for each $u$, the lines in $B_u$ with multiplicity $k-j-u$, and no other line with positive multiplicity. Note that $\mathcal{C}$ contains $3N\ceil{\frac{n}{M}}$ distinct lines with an average multiplicity of $(k-j)/2$, implying that
\begin{equation}
\label{eq:conjupperbound}
|\mathcal{C}| = 3N\left(\frac{n}{M} + O(1)\right)\left(\frac{k-j}{2}\right) = \frac{3N(k-j)}{2M} n + O_k(1).
\end{equation}
Noting that
$$ \frac{3N(k-j)}{2M} = \begin{dcases}
2j + 1 & \mbox{if } r = 1,\\
2j + 1 + \frac{2j+1}{3j+2} & \mbox{if } r = 2,\\
2j + 2 + \frac{j+1}{3j+4} & \mbox{if } r = 3,\\
\end{dcases} $$
we obtain
$$|\mathcal{C}| = f^*(k, 2)n + O_k(1).$$

It remains to show that every point of $T_2(n)$ is covered at least $k$ times. Let $p = (a, b) \in T_2(n)$. Note that at least one of the lines given by $x = a$ and $y = b$ must have positive multiplicity in $\mathcal{C}$, otherwise $a + b \geq (2k - 2j - 2) \ceil{\frac{n}{M}} \geq n$, a contradiction. By symmetry, the same is true for any pair of standard lines containing $p$. Hence, we may assume, without loss of generality, that the lines given by $x = a$ and $y = b$ are contained in the sets $B_s$ and $B_t$ respectively. 

Now $p$ is covered with multiplicity $2k - 2j - (s + t)$ by vertical and horizontal lines.
Hence, if $s+t \leq k - 2j$, then $p$ is covered with multiplicity at least $k$.  From here on, we assume that $s + t > k - 2j$, so $s + t \geq k - 2j+1$. Note that
$$ (s + t - 2) \ceil{\frac{n}{M}} \leq a + b \leq n - 1, $$
which implies
\begin{align*}
0 \leq n-1-(a+b) & \leq n-1 - (s + t - 2) \ceil{\frac{n}{M}} \leq (M - (s + t - 2))\ceil{\frac{n}{M}} - 1.
\end{align*}
Noting that 
$$M-(s+t-2) = 2k-3j - (s+t) \leq k - j - 1 = N,$$
we obtain that the line given by $x + y  = a + b$ is contained in the set $B_r$ for some $r \le M-(s+t-2)$, and, hence, is contained in the cover $\mathcal{C}$ with multiplicity $k-j-r$ which is at least
$$k - j - (2k - 3j - (s + t)) = 2j - k + s + t.$$
Since $p$ is contained in the line $x + y = a + b$, we obtain that $p$ is covered with total multiplicity at least~$k$.
\end{proof}

In Theorem \ref{partialpr}, we provide a partial resolution of the lower bound in Conjecture \ref{duality}. Note that all the covers we have given in order to prove upper bounds consist only of standard lines.
Furthermore, for two parallel lines, the multiplicity of the line containing more points of $T_d(n)$ is at least the multiplicity of the other line. The following theorem proves Conjecture \ref{duality} for such covers.

\begin{theorem}\label{partialpr}
Suppose $\mathcal{C}$ is a $k$-cover of $T_2(n)$ satisfying the following:
\begin{itemize}
\item any line contained in $\mathcal{C}$ is a standard line, i.e., a line parallel to a side of $T_2(n)$,
\item if $\ell_1, \ell_2 \in \mathcal{C}$ are parallel lines with $|\ell_1 \cap T_2(n)| \leq |\ell_2 \cap T_2(n)|$, then the multiplicity of $\ell_1$ is at most the multiplicity of $\ell_2$.
\end{itemize}
Then $|\mathcal{C}| \geq f^*(k,2)n + O_k(1)$.
\end{theorem}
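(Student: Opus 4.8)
The plan is to show that any $k$-cover $\mathcal{C}$ satisfying the two hypotheses gives rise, after rescaling, to a fractional cover of $T_2(k)$, so that the size of $\mathcal{C}$ is bounded below by $f^*(k,2)n$ up to $O_k(1)$ terms. The key structural input is that $\mathcal{C}$ consists only of standard lines, so we may organize it by the three standard directions. For each direction, index the lines by their ``offset'' $i$ (so the vertical lines are $x = i$ for $i \in \{0, \dots, n-1\}$, and similarly for horizontal and diagonal), and let $c_i^{(v)}, c_i^{(h)}, c_i^{(d)}$ denote the multiplicity of each such line in $\mathcal{C}$. The monotonicity hypothesis says precisely that within each direction the multiplicity is nonincreasing as the offset moves away from the bounding line (equivalently, as $|\ell \cap T_2(n)|$ decreases). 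The total size is $|\mathcal{C}| = \sum_i \bigl(c_i^{(v)} + c_i^{(h)} + c_i^{(d)}\bigr)$.

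First I would set up the reduction to a fractional cover of $T_2(k)$. The natural idea is to partition the offsets $\{0, \dots, n-1\}$ in each direction into $k$ consecutive blocks $I_1, \dots, I_k$, each of length roughly $n/k$, where block $I_m$ is thought of as collapsing to the offset-$m$ standard line of $T_2(k)$. Define the weight of the offset-$m$ standard line of $T_2(k)$ (in each direction) to be $w_m := \frac{1}{n}\,\overline{c}_m$, where $\overline{c}_m$ is an appropriately chosen representative (e.g.\ the minimum, to get a valid lower bound) of the multiplicities $c_i$ over $i \in I_m$. The sum $\sum_m w_m$ in each direction then underestimates $\frac1n \sum_i c_i$ up to a loss controlled by the block boundaries; because each block has length $O(n/k)$ and there are $O(k)$ blocks, the total error is $O_k(1)$ after multiplying back by $n$. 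The crux is then to check that the weights $(w_m)$ form a genuine fractional cover of $T_2(k)$: every point $q = (a,b) \in T_2(k)$ must be covered with total weight at least $1$.

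The main obstacle, and the step I expect to require real care, is verifying the fractional-cover constraint after collapsing. Given a point $q=(a,b)\in T_2(k)$, I would pull it back to a corresponding point $p=(a',b')\in T_2(n)$ lying in the intersection of the three blocks indexed by $a$, $b$, and $a+b$ — that is, choose $p$ so that its vertical offset lies in $I_a$, horizontal offset in $I_b$, and diagonal offset in $I_{k-1-(a+b)}$ (one must confirm such a grid point exists, which is where the block lengths being $\approx n/k$ and the monotonicity hypothesis are used to guarantee a valid representative with the \emph{minimal} multiplicity in each block actually covers $p$). Since $\mathcal{C}$ is a $k$-cover, $p$ receives total multiplicity at least $k$ from the (at most three) standard lines through it; by monotonicity, the minimum-representative multiplicities $\overline{c}_a, \overline{c}_b, \overline{c}_{k-1-(a+b)}$ are each at least the multiplicities of the actual lines through $p$ lying at the ``shallow end'' of their blocks, so their sum is at least $k$. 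Dividing by $n$ and matching the block-to-offset correspondence yields total weight at least $k/n \cdot (1/k)\cdot\dots$ — more precisely, one arranges the block length to be exactly $\lceil n/k\rceil$ or uses $w_m = \overline{c}_m/(k \cdot \lceil n/k \rceil)$ so that the constraint normalizes to $\geq 1$; the bookkeeping here is the delicate part.

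Once the fractional-cover property is established, the conclusion is immediate: $f^*(k,2) \leq \sum_m w_m^{(v)} = \sum_m w_m^{(h)} = \sum_m w_m^{(d)}$ (by the threefold symmetry of the construction, or just using the directional sums separately), so
\[
|\mathcal{C}| = \sum_i\bigl(c_i^{(v)}+c_i^{(h)}+c_i^{(d)}\bigr) \geq n\sum_m\bigl(w_m^{(v)}+w_m^{(h)}+w_m^{(d)}\bigr) - O_k(1) \geq f^*(k,2)\,n - O_k(1),
\]
where the first inequality absorbs the block-boundary losses into the $O_k(1)$ term and the last uses that each directional weight-sum is a fractional cover of $T_2(k)$ so is at least $f^*(k,2)$. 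I would double-check the direction of the monotonicity inequality (that the minimum over a block, not the maximum, is what correctly lower-bounds the cover) and confirm that the error from rounding block lengths and from discarding partial blocks is genuinely $O_k(1)$ and not $O_k(n)$; this is the only place where the hypotheses could be mis-applied, so it warrants an explicit estimate rather than a hand-wave.
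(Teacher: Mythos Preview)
Your high-level plan---manufacture a fractional cover of $T_2(k)$ out of $\mathcal{C}$, then invoke the definition of $f^*(k,2)$---is exactly the paper's strategy. But the block-partition implementation has two genuine problems, and the paper circumvents both by a different change of variables.

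\textbf{The normalization is inconsistent.} With $w_m=\overline{c}_m/n$ and $\overline{c}_m$ a single multiplicity (at most $k-1$), at any point of $T_2(k)$ the three weights sum to at most $3(k-1)/n\to 0$, so this is never a fractional cover. The variant $w_m=\overline{c}_m/(k\lceil n/k\rceil)$ has the same defect. If instead you normalize by $k$ so that the cover constraint becomes $\overline{c}_a+\overline{c}_b+\overline{c}_c\ge k$, then $n\sum_m w_m = (n/k)\sum_m \overline{c}_m$ and your claimed error $\sum_i c_i - n\sum_m w_m = O_k(1)$ is false: take all $c_i\equiv 1$, so $\sum_i c_i=n$ but $(n/k)\sum_m\overline{c}_m=n$; now perturb a single $c_i$ down to $0$ and the block minimum drops, costing $n/k$, not $O_k(1)$.

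\textbf{The representative point need not exist.} Even granting the right normalization, you need a point $p\in T_2(n)$ sitting at the minimum-multiplicity end of all three blocks $I_a,I_b,I_c$ (with $a+b+c=k-1$). The minimum end of $I_m$ is offset $(m+1)\lceil n/k\rceil-1$. Putting $p$ there for the vertical and horizontal directions gives $p_1+p_2=(a+b+2)\lceil n/k\rceil-2$, which for $a+b=k-1$ and $n>k$ already exceeds $n-1$; so $p\notin T_2(n)$. For smaller $a+b$ the point lies in $T_2(n)$ but its diagonal offset lands outside $I_c$. You therefore have no witness for $\overline{c}_a+\overline{c}_b+\overline{c}_c\ge k$. (Your sentence ``the minimum-representative multiplicities \dots\ are each at least the multiplicities of the actual lines through $p$'' has the inequality backwards: the block minimum is $\le$ every multiplicity in the block.)

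\textbf{What the paper does instead.} Rather than partition the \emph{offsets} into blocks, partition the \emph{multiplicity values} into level sets. For $i\in\{1,\dots,k-1\}$ set $\alpha_i=|\{j:c_j^{(v)}\ge k-i\}|$, and likewise $\beta_i,\gamma_i$. Monotonicity makes each $\alpha_i$ an offset (the first $\alpha_i$ vertical lines are exactly those with multiplicity $\ge k-i$), and the layer-cake identity gives $\sum_j c_j^{(v)}=\sum_{i=1}^{k-1}\alpha_i$ \emph{exactly}, with no block-boundary error. Now for any $r,s,t\in\{1,\dots,k-1\}$ with $r+s+t=2k-2$: either $\alpha_r+\beta_s\ge n$ already, or $p=(\alpha_r,\beta_s)\in T_2(n)$ is covered at most $(k-1-r)+(k-1-s)$ times by vertical and horizontal lines, hence at least $k-t$ times diagonally, forcing $\gamma_t\ge n-\alpha_r-\beta_s$. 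Either way $\alpha_r+\beta_s+\gamma_t\ge n$. Dividing by $n$ and setting all non-standard weights to zero produces a feasible solution to the fractional-cover LP for $T_2(k)$ with objective $|\mathcal{C}|/n$, so $|\mathcal{C}|\ge f^*(k,2)\,n$. The switch from spatial blocks to multiplicity level sets is precisely what makes the representative point $(\alpha_r,\beta_s)$ well-defined and the accounting exact.
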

\begin{proof}
Our proof proceeds by explicitly comparing the linear program for fractionally covering $T_2(k)$ and the integer program for $k$-covering $T_2(n)$ for any $n$.

We first describe the linear program for fractionally covering $T_2(k)$. We will refer to this linear program as $\mathbf{L}$. For each $i \in \{1, \dots, k-1\}$, we associate variables $a_i$, $b_i$, $c_i$ to the standard lines given by $x= k - 1 -i$, $y = k - 1 -i$, $x + y = i$. Observe that the lines corresponding to $a_i, b_i,$ and $c_i$ each contain $i+1$ points.
The variables give the weights of the corresponding lines in a fractional cover. Note that we do not associate a variable to any line that contains at most one point of $T_2(k)$, e.g., the line $x = k-1$. This is without loss of generality, as there is an optimal cover with these lines having weight 0. We also have (a finite number of) additional variables $d_1, \dots, d_q$ corresponding to non-standard lines that contain at least two points of $T_2(k)$. With these definitions, the objective of the linear program is to minimize the sum of all variables, i.e., the objective function is
$$ \text{minimize} \quad \sum_{i = 1}^{k-1} (a_i + b_i + c_i) + \sum_{i = 1}^{q} d_i. $$

The constraints for the linear program consist of a non-negativity constraint for each variable, and a constraint that each point of $T_2(k)$ is contained in lines of total weight at least 1. Specifically, for each point $p \in T_2(k)$, we have a constraint of the form 
\begin{equation}
\label{eq:lpconstraints}
a_r + b_s + c_t + \sum_{i \in \mathcal{L}(p)} d_i \geq 1,
\end{equation}
where $\mathcal{L}(p)$ is the set of indices of non-standard lines containing $p$ (and at least one other point of $T_2(k)$). Note that three standard lines intersect in a point of $T_2(k)$ if and only if they contain a total of $2k+1$ points of $T_2(k)$ (with multiplicity).  This implies that we have such a constraint if and only if $(r+1) + (s+1) + (t + 1) = 2k+1$, or, equivalently, $r + s + t = 2k - 2$.

We now consider the restricted $k$-covering problem for $T_2(n)$ and describe the corresponding integer program. We will refer to this integer program as $\mathbf{I}$. Recall that we require the covers to satisfy the following restrictions:
\begin{itemize}
\item any line contained in the cover is a standard line,
\item if $\ell_1, \ell_2$ are parallel lines with $|\ell_1 \cap T_2(n)| \leq |\ell_2 \cap T_2(n)|$, then the multiplicity of $\ell_1$ is at most the multiplicity of $\ell_2$.
\end{itemize}

We may assume that every line has multiplicity less than $k$. Indeed, if some line has multiplicity at least $k$, then there is a bounding line with multiplicity at least $k$. However, by Observation~\ref{boundary}, we  may assume that each bounding line has multiplicity less than $f^*(k,2)$, and $f^*(k,2) \leq k$ by Theorems~\ref{1mod3} and \ref{20mod3}.

For $i\in\{1,\dots,k-1\}$, let $\alpha_i$ denote the number of vertical lines which have multiplicity at least $k - i$. Similarly, let $\beta_i$ and $\gamma_i$, with $i \in \{1, \dots, k-1\}$, denote the number of horizontal and diagonal standard lines (respectively) which have multiplicity at least $k - i$. Recall that the goal is to find a minimum $k$-cover. The number of vertical lines (with multiplicity) in the cover is
$$(k-1)\alpha_1 + (k-2)(\alpha_2 - \alpha_1) + (k - 3)(\alpha_3 - \alpha_2) + \dots + 1(\alpha_{k-1} - \alpha_{k-2}) = \sum_{i = 1}^{k-1}\alpha_i.$$
Similarly, the number of vertical and diagonal lines is $\sum_{i = 1}^{k-1} \beta_i$ and $\sum_{i = 1}^{k-1} \gamma_i$ respectively. Hence, the objective function is 
$$ \text{minimize} \quad \sum_{i = 1}^{k-1} (\alpha_i + \beta_i + \gamma_i).$$

The constraints for the integer program consist of an integrality and non-negativity constraint for each variable. We also need constraints specifying that each point $p \in T_2(n)$ is covered by at least $k$ lines.

Consider an assignment of values to $\alpha_i$'s, $\beta_i$'s, and $\gamma_i$'s such that each point $p$ is covered at least $k$ times. Suppose $\alpha_r+\beta_s\le n-1$ and let  $p=(\alpha_r,\beta_s)\in T_2(n)$. Since $p$ is covered at most $(k-r)-1+(k-s)-1=2k-(r+s+2)$ times by horizontal and vertical lines, it must be covered at least an additional $k-[2k-(r+s+2)]=r+s+2-k$ times by the diagonal line $x+y=\alpha_r+\beta_s$. This implies that if $r+s+2-k\in\{1,\dots,k-1\}$, then
$\gamma_{k-(r+s+2-k)}\geq{n-(\alpha_r+\beta_s)}$, that is, 
$$\alpha_r+\beta_s+\gamma_{2k-2-(r+s)}\ge n.$$
Note that if $r+s+t=2k-2$ with $r,s,t\in\{1,\dots,k-1\}$, then $r+s+2-k\in\{1,\dots,k-1\}$. Hence, the following constraints are necessary to ensure that each point of $T_2(n)$ is covered $k$ times. For each $r, s, t$ such that $r + s + t = 2k - 2$, we have the constraint
\begin{equation}
\label{eq:ipconstraints}
\alpha_r+\beta_s+\gamma_t\ge n.
\end{equation}

Suppose we have a feasible solution to the integer program $\mathbf{I}$, and let $B := \sum_{i = 1}^{k-1} (\alpha_i + \beta_i + \gamma_i)$. Of course, this feasible solution corresponds to a $k$-cover $\mathcal{C}$ of $T_2(n)$ with $|\mathcal{C}| = B$ satisfying the assumptions of the theorem. Recall that we want to prove $B \geq f^*(k, 2)n + O_k(1)$. We show this by constructing a feasible solution to the linear program $\mathbf{L}$ such that the objective function $\sum_{i = 1}^{k-1} (a_i + b_i + c_i) + \sum_{i = 1}^{q} d_i \leq B/n$. Since $\mathbf{L}$ is a minimization problem, this implies the result.

The feasible solution to $\mathbf{L}$ is obtained as follows: for each $i \in \{1, \dots, k-1\}$, set $a_i = \alpha_i/n$, $b_i = \beta_i/n$ and $c_i = \gamma_i/n$. For each $i \in\{1, \dots, q\}$, set $d_i = 0$.  That this results in a feasible solution is easy. Note first that all the non-negativity constraints are satisfied. To see that the constraints given by~\eqref{eq:lpconstraints} are satisfied, it suffices to compare to the constraints given by~\eqref{eq:ipconstraints} and to note that $d_i = 0$ for each $i$.

\end{proof}

\section{Higher dimensions}\label{sec:highdim}
In this section, we extend the results and techniques of Section~\ref{sec:intcovering} to the higher-dimensional setting. We first consider the problem of covering the points of $T_{d}(n)$ at least $k$ times for $k\le 3$.
\begin{theorem}\label{prop:1ce2ce}
For every $n\geq{1}$,
\begin{enumerate}[label = (\alph*)]
    \item \label{itm:highdim1} For $d\geq{1}$, $f(n,d) = n$,
    \item \label{itm:highdim2} For $d\geq{1}$, $f(n,d,2) = n+ \lceil{\frac{n}{d}\rceil}$,
    \item \label{itm:highdim3} For $d\geq{3}, f(n,d,3) = \left(1+\frac{2}{d-1}\right)n + O_d(1)$.
\end{enumerate}
\end{theorem}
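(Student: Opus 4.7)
\emph{Plan.} I will prove each of the three parts in turn, using induction on $d$ together with Observation~\ref{boundary} (to reduce to covers whose bounding hyperplanes have small multiplicity) and a \emph{restriction} argument: the intersection of a hyperplane $H\subset\RR^d$ with a standard hyperplane $H_0$ of $T_d(n)$ is a hyperplane of $H_0\cong\RR^{d-1}$ whenever $H\neq H_0$ and $H\not\parallel H_0$, so a $k$-cover of $T_d(n)$ restricts to a $k'$-cover of $T_{d-1}(m)$ inside the face.

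For part~\ref{itm:highdim1}, the upper bound $f(n,d)\leq n$ is witnessed by the diagonal cover $\{x_1+\cdots+x_d=i\}_{i=0}^{n-1}$. For the lower bound I induct on $d$, with the trivial base case $d=1$. Using Observation~\ref{boundary} and induction on~$n$, I may assume no bounding hyperplane lies in the cover~$\mathcal{C}$; then the restrictions of the hyperplanes in~$\mathcal{C}$ to $H_0=\{x_d=0\}$ form a $1$-cover of $T_d(n)\cap H_0\cong T_{d-1}(n)$, so $|\mathcal{C}|\geq f(n,d-1)=n$ by the inductive hypothesis on $d$.

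For part~\ref{itm:highdim2}, the upper bound uses the fact that every point of $T_d(n)$ has some coordinate $\leq\lceil n/d\rceil-1$, allowing a $2$-cover of size $n+\lceil n/d\rceil$ built from the $n$ diagonals plus $\lceil n/d\rceil$ well-chosen standard hyperplanes $\{x_j=c\}$ with $c$ small. For the lower bound I induct on $d$; after reducing to bounding multiplicities $\leq 1$ via Observation~\ref{boundary}, for each of the $d+1$ standard directions $j$ let $s_j$ be the smallest nonnegative integer with $\{x_j=s_j\}$ of multiplicity~$0$ in $\mathcal{C}$. The $j$-parallel hyperplanes contribute at least $s_j$ to $|\mathcal{C}|$, and restricting non-$j$-parallel hyperplanes to $\{x_j=s_j\}$ yields a $2$-cover of $T_{d-1}(n-s_j)$, so by induction $|\mathcal{C}|\geq n+\lceil(n-s_j)/(d-1)\rceil$. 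Either some $s_j$ is small (and this single inequality suffices), or $s_j\geq\lceil n/d\rceil$ for every $j$, in which case the disjoint contributions yield $|\mathcal{C}|\geq\sum_j s_j\geq(d+1)\lceil n/d\rceil\geq n+\lceil n/d\rceil$.

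For part~\ref{itm:highdim3}, I propose a symmetric upper-bound construction: let $m=\lceil n/(d-1)\rceil$ and include, with multiplicity one, the hyperplanes $\{x_j=c\}$ for $j\in\{1,\ldots,d\}$, $c\in\{0,\ldots,m-1\}$, together with $\{x_1+\cdots+x_d=n-1-c\}$ for $c\in\{0,\ldots,m-1\}$, totaling $(d+1)m=(1+2/(d-1))n+O_d(1)$. At any $p\in T_d(n)$, at most $d-2$ coordinates can be $\geq m$ (else $\sum x_i\geq(d-1)m\geq n$), so at least two axis-aligned hyperplanes hit~$p$; in the borderline case of exactly $d-2$ large coordinates, $\sum x_i\geq(d-2)m\geq n-m$, and so the diagonal through~$p$ is included in the cover and supplies the third hit. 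For the lower bound I would adapt the $d=2$, $k=3$ argument in Theorem~\ref{2int}: for each of the $d+1$ standard directions $j$, let $s_j\leq t_j$ be the smallest nonnegative integers with $\{x_j=s_j\}$ of multiplicity $\leq 1$ and $\{x_j=t_j\}$ of multiplicity $0$ in~$\mathcal{C}$. Then the $j$-parallel hyperplanes contribute $\geq s_j+t_j$, while restricting non-$j$-parallel hyperplanes to $\{x_j=s_j\}$ gives a $2$-cover of $T_{d-1}(n-s_j)$, contributing $\geq f(n-s_j,d-1,2)$ by part~\ref{itm:highdim2}. Combining these $d+1$ inequalities with $\sum_j(s_j+t_j)\leq|\mathcal{C}|$ via a pigeonhole/averaging argument --- analogous to the 2D argument that forces more than $3n/4$ lines in each of three directions --- should yield the asymptotic bound $(1+2/(d-1))n-O_d(1)$. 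The main obstacle here is that a single-direction restriction yields only $|\mathcal{C}|\geq dn/(d-1)+O_d(1)$, and closing the residual $n/(d-1)$ gap needs a delicate multi-direction argument together with handling of the (few) non-standard hyperplanes that may appear in a cover.
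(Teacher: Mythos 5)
Parts~\ref{itm:highdim1} and the lower bound of part~\ref{itm:highdim2} are correct, and your lower-bound argument for \ref{itm:highdim2} is a genuinely different route from the paper's: the paper assumes all $d+1$ bounding hyperplanes have multiplicity exactly one, observes each face then contains a once-covered interior copy of $T_{d-1}(n-d)$, and uses that a non-bounding hyperplane meets at most $d$ of the $d+1$ faces; your per-direction ``first empty layer'' dichotomy (either some $s_j$ is small, giving $|\mathcal{C}|\ge n+\ceil{(n-s_j)/(d-1)}\ge n+\ceil{n/d}$, or all $s_j\ge\ceil{n/d}$, giving $|\mathcal{C}|\ge\sum_j s_j\ge(d+1)\ceil{n/d}$) is cleaner and checks out. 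The upper bound in \ref{itm:highdim3} is the paper's construction verbatim. However, there are two genuine gaps. First, your upper-bound construction for \ref{itm:highdim2} cannot work: every point of $T_d(n)$ lies on \emph{exactly one} diagonal $x_1+\cdots+x_d=i$, so each point must be hit by one of your additional hyperplanes, i.e.\ the $\ceil{n/d}$ extras would have to form a $1$-cover of $T_d(n)$ by themselves --- and by part~\ref{itm:highdim1} that requires at least $n$ hyperplanes when $d\ge 2$. Any cover of your shape thus has size at least $2n$, far above $n+\ceil{n/d}$. The correct construction (the paper's) instead balances the budget across all $d+1$ standard directions, placing $q$ or $q+1$ hyperplanes, $q\approx n/d$, in each direction: points with two small coordinates are doubly covered by axis hyperplanes, and points with exactly one small coordinate pick up their second cover from a diagonal at level at most $q-1$.

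Second, the lower bound in \ref{itm:highdim3} has the gap you yourself flag, and the ingredients you list do not close it. Your only quantitative inequality, $|\mathcal{C}|\ge s_j+t_j+f(n-s_j,d-1,2)$, combined with the assumption $|\mathcal{C}|<\left(1+\frac{2}{d-1}\right)n-C$, yields \emph{upper} bounds $t_j<(n+s_j)/(d-1)+O_d(1)$, so the sum $\sum_j(s_j+t_j)\le|\mathcal{C}|$ produces no contradiction --- in the 2D case of Theorem~\ref{2int} the pigeonhole works precisely because \emph{both} the multiplicity-$\le 1$ line and the multiplicity-$0$ line are used as restriction levels, yielding lower bounds on $s$ and $t$. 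The missing ingredient is the second restriction inequality $|\mathcal{C}|\ge s+t+f(n-t,d-1,3)$, with $f(\cdot,d-1,3)$ supplied by induction on $d$; a weighted combination of the two inequalities then eliminates the free parameter, which is exactly how the paper handles $d>3$ (with the bounding hyperplane, of multiplicity exactly one, serving as the multiplicity-$\le 1$ level). Even with both inequalities, a single-direction argument provably fails at the base case $d=3$: taking $\frac{5}{4}$ times $|\mathcal{C}|\ge s+t+\frac{3}{2}(n-s)$ plus $|\mathcal{C}|\ge s+t+\frac{9}{4}(n-t)$ gives only $|\mathcal{C}|\ge\frac{11}{6}n+O(1)$, short of the target $2n-O(1)$. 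The paper's base case is instead a multi-face count of the kind your sketch does not contain: all four bounding planes have multiplicity one, each face's interior is a once-covered copy of $T_2(n-3)$ requiring $f(n-3,2,2)=\ceil{3(n-3)/2}$ lines, and a non-bounding plane meets at most three of the four faces, whence $|\mathcal{C}|\ge 4+\frac{4}{3}\ceil{\frac{3(n-3)}{2}}\ge 2n-2$; this step also leans on the exact values $f(n,2,2)$ and $f(n,2,3)$ from Theorem~\ref{2int}. Without the 3-cover restriction inequality and this $d=3$ anchor, the ``delicate multi-direction argument'' you hope for does not materialize.
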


\begin{obs}\label{d-2}
    The intersection of a bounding hyperplane $H$ with $T_d(n)$ is a copy of $T_{d-1}(n)$. Any  hyperplane not parallel to $H$ intersects $H$ in a $(d-2)$-dimensional affine subspace. Hence covering the points of $H\cap T_d(n)$ at least $k$ times without using $H$ requires at least $f(n,d-1,k)$ hyperplanes.
\end{obs}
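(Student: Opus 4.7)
The observation has three parts to verify. The plan is to handle each of the first two quickly (they are essentially definitional and a standard linear algebra fact), and then chain them together to get the third.

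For the first claim, I would consider the two possible forms of a bounding hyperplane $H$. If $H$ is given by $x_i = 0$, then $H \cap T_d(n)$ is the set of $(x_1,\dots,x_d) \in \ZZ_{\ge 0}^d$ with $x_i = 0$ and $\sum_j x_j \le n-1$; dropping the $i$-th coordinate gives an explicit affine isomorphism onto $T_{d-1}(n)$. If $H$ is given by $x_1 + \dots + x_d = n-1$, then $H \cap T_d(n)$ consists of the points $(x_1,\dots,x_d) \in \ZZ_{\ge 0}^d$ with $x_1+\dots+x_d = n-1$; projecting onto the first $d-1$ coordinates maps this bijectively onto the set $\{(x_1,\dots,x_{d-1}) \in \ZZ_{\ge 0}^{d-1} \mid x_1 + \dots + x_{d-1} \le n-1\} = T_{d-1}(n)$, with the last coordinate recovered as $n-1-\sum x_j$. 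Both maps are restrictions of affine maps, so in either case $H \cap T_d(n)$ is a copy of $T_{d-1}(n)$.

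For the second claim, I would recall that if $H_1$ and $H_2$ are distinct hyperplanes in $\RR^d$ that are not parallel, then their defining linear functionals are linearly independent, so the common zero set (after shifting to pass through the origin) is a linear subspace of dimension $d - 2$, and thus $H_1 \cap H_2$ is a $(d-2)$-dimensional affine subspace. Applied with $H_1 = H$, this gives the statement.

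For the main (third) assertion, suppose $\mathcal{C}$ is a multiset of hyperplanes, none equal to $H$, that covers every point of $H \cap T_d(n)$ at least $k$ times. Any hyperplane $H' \in \mathcal{C}$ parallel to $H$ is disjoint from $H$ (since $H' \neq H$), so it contributes nothing to the covering; I may discard all such $H'$ without changing the property that every point of $H \cap T_d(n)$ is covered at least $k$ times. For each remaining $H' \in \mathcal{C}$, the intersection $H' \cap H$ is, by the second claim, a $(d-2)$-dimensional affine subspace of $H$, i.e., an affine hyperplane inside $H \cong \RR^{d-1}$. Transporting along the affine isomorphism $H \cap T_d(n) \to T_{d-1}(n)$ from the first claim, these $(d-2)$-dimensional subspaces become a multiset of affine hyperplanes in $\RR^{d-1}$ that still covers each point of $T_{d-1}(n)$ at least $k$ times. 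By definition, such a multiset has size at least $f(n,d-1,k)$, and since the number of hyperplanes retained is at most $|\mathcal{C}|$, we conclude $|\mathcal{C}| \ge f(n,d-1,k)$.

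No real obstacle is anticipated; the only subtlety worth flagging is that two distinct hyperplanes of $\mathcal{C}$ might have the same intersection with $H$, but this only strengthens the covering multiplicity on $H$ and does not cause us to overcount the size of $\mathcal{C}$, so the bound is unaffected.
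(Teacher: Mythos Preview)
Your proposal is correct and complete. The paper does not supply a separate proof of this observation; the reasoning is left implicit in the statement itself, and your write-up simply unpacks that reasoning (identify the face with $T_{d-1}(n)$, intersect each non-parallel hyperplane with $H$ to get a $(d-2)$-flat, discard parallel ones, transport to $\RR^{d-1}$). There is nothing to compare and no gap to flag.
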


\begin{proof}[Proof of Theorem \ref{prop:1ce2ce}] {~}

\emph{Proof of \ref{itm:highdim1}:} We begin by proving the lower bound. It is straightforward that $f(1,d)\ge 1$. Let  $\mathcal{C}$ be a cover of $T_d(n)$ and $H$ be the hyperplane given by $x_1 = 0$. By Observation~\ref{boundary}, we may assume that $H \notin \mathcal{C}$.
We have $f(n,1)=n$, and Observation~\ref{d-2} gives $|\mathcal{C}|\geq{f(n,d-1)}$ so by induction on $d$, $f(n,d)\geq{n}$. 

A matching upper bound can be obtained by considering the hyperplanes, $x_1 = i$ for ${i\in\{0, \dots, n-1\}}$.

\emph{Proof of \ref{itm:highdim2}:} The proof of the lower bound in \ref{itm:highdim2} also proceeds by induction on $d$ and $n$. When $n=1$, $T_d(1)$ consists of a single point and a cover requires two hyperplanes. For $d=1$, each hyperplane can cover at most one point of $T_1(n)$, so $2n$ planes are required. Suppose now that $d, n > 1$.

Let  $\mathcal{C}$ be a 2-cover of $T_d(n)$, $H_0$ be the hyperplane given by $x_1+\dots+x_d=n-1$, and $H_i$ be the hyperplane given by $x_i = 0$, for $i \in \{1, \dots, d\}$. By Observation \ref{boundary}, we may assume that the multiplicity of each $H_i$ in $\mathcal{C}$ is at most one.

If, for some $i$, $H_i \notin \mathcal{C}$, Observation \ref{d-2} implies that
$$|\mathcal{C}| \geq f(n, d-1, 2) = n + \ceil{\frac{n}{d-1}} \geq {n + \ceil{\frac{n}{d}}}.$$

Thus, we may assume that the multiplicity of each $H_i$ in $\mathcal{C}$ is exactly one. Note that $H_i \cap T_{d}(n)$ constitutes a copy of $T_{d-1}(n)$ where only the points on the boundary are covered at least twice, leaving a copy of $T_{d-1}(n-d)$ that has only been covered once. Each of these requires $f(n-d, d-1, 1) = n-d$ affine subspaces of dimension $d-2$ to be covered. A hyperplane $H \in \mathcal{C}\setminus \bigcup_{i= 0}^{d} \{ H_i\}$ cannot intersect every face of $T_d(n)$ so it can contribute at most $d$ such $(d-2)$-dimensional subspaces, implying that 
$$
    |\mathcal{C}| \geq (d+1) + \ceil{\frac{(d+1)(n-d)}{d}} = n+ \ceil{\frac{n}{d}}.
$$
This completes the proof of the lower bound in \ref{itm:highdim2}.

We now establish the upper bound in \ref{itm:highdim2}. Let $q \in \mathbb{Z}$ and $0\leq{r} < d+1$ be such that $n + \ceil{\frac{n}{d}} = q(d+1)+r$. The cover consists of the following:
\begin{itemize}
\item the hyperplane given by $x_i = j$ for $1 \leq i \leq r$ and $j\in\{0,\dots,q\}$,
\item the hyperplane given by $x_i = j$ for $r+1 \leq i \leq d$ and $j\in \{0,\dots, q-1\}$,
\item the hyperplane given by $x_1+\dots+x_d = n -1 - j$ for $j\in\{0,\dots,q-1\}$.
\end{itemize}
If $1 \leq i \leq r$, we set $c_i = q$, and if $r+1 \leq i \leq d$, we set $c_i = q-1$. Let $p = (a_1, \dots, a_d) \in T_d(n)$. There must be at least one index $i$ such that $a_i \leq c_i$. Indeed, otherwise
$$  \sum_{i = 1}^{d} a_i \geq r(q+1)+(d-r)q = qd+r\geq{\frac{d}{d+1}\left(n+\ceil{\frac{n}{d}}\right)} \geq n, $$
a contradiction. If there are at least two indices such that $a_i \leq {c_i}$, then $p$ is covered at least twice by hyperplanes orthogonal to standard basis vectors.

Suppose that there is exactly one index~$i$ such that $a_i \leq c_i$. Note that $p$ is covered by the hyperplane $x_i = a_i$. It suffices to show that $\sum_{i = 1}^{d} a_i \geq n - q$, which implies that $p$ is covered by $x_1+\dots+x_d = \sum_{i = 1}^{d} a_i$.

Since $n + \ceil{\frac{n}{d}} = q(d+1) + r$, we have
\begin{align}
\label{eq:highdimupperboundeq}
n - q \leq q(d - 1) + r\left(1 - \frac{1}{d+1}\right).
\end{align}
If $r = 0$, then $\sum_{i = 1}^{d} a_i \geq (d-1)q \geq n - q$. If $r > 0$, then $\sum_{i = 1}^{d} a_i \geq (r-1)(q+1) + (d-r)q = q(d - 1) + r - 1$. Since $0 < r/(d+1) < 1$ and $n - r$ is an integer, we obtain, by \eqref{eq:highdimupperboundeq}, that $\sum_{i = 1}^{d} a_i \geq n - q$. 

\emph{Proof of \ref{itm:highdim3}:} We first show that $f(n, d, 3) \geq \left(1+\frac{2}{d-1}\right)n-2$. We begin by showing $f(n,3,3)\geq{2n-2}$ via induction on $n$.
 The bound is trivial for $n = 1$. Suppose now that $n > 1$ and that the assertion is true for smaller $n$. 
 
Let  $\mathcal{C}$ be a 3-cover of $T_d(n)$, $H_0$ be the hyperplane given by $x_1+x_2+x_3=n-1$, and $H_i$ be the hyperplane given by $x_i = 0$, for $i \in \{1, 2, 3\}$. By Observation \ref{boundary}, we may assume that the multiplicity of each $H_i$ in $\mathcal{C}$ is at most one.
If, for some $i$, $H_i \notin \mathcal{C}$, then by Observation \ref{d-2}, $|\mathcal{C}| \geq f(n, 2, 3)=\ceil{\frac{9n}{4}}>2n-2$.
 
We may now assume that, for each $i$, the hyperplane $H_i$ has multiplicity exactly one.
Observe that $H_i \cap T_3(n)$ is a copy of $T_2(n)$ where only the boundary points have been covered at least twice. The interior points constitute a copy of $T_2(n-3)$ which has only been covered once and, by Theorem~\ref{2int}, requires at least $f(n-3, 2, 2) = \ceil{\frac{3(n-3)}{2}}$ lines to be covered three times. Since every hyperplane in $\mathcal{C}$ that is not a bounding hyperplane can intersect at most three faces of $T_3(n)$, we obtain that $$|\mathcal{C}| \geq 4 + \frac{4}{3}\ceil{\frac{3(n-3)}{2}} \geq 2n - 2.$$ This completes the proof that $f(n,3,3)\ge 2n-2$.

Now suppose $d>3$ and proceed by induction on $d$. We trivially have $f(n,d,3)\geq{(1+\frac{2}{d-1})n-2}$ for $n=1$ and by Observation \ref{boundary}, we may assume that each bounding hyperplane is used at most once.
Let $\mathcal{C}$ be a 3-cover of $T_d(n)$ and $H$ be a bounding hyperplane of $T_d(n)$. If $H \notin \mathcal{C}$, then Observation \ref{d-2} gives $$|\mathcal{C}| \geq f(n,d-1,3)\ge \left(1+\frac{2}{d-2}\right)n-2 \geq \left(1+\frac{2}{d-1}\right)n-2. $$
    
Thus, we may assume the multiplicity in $\mathcal{C}$ of each bounding hyperplane is exactly one. Let $s$ be the smallest nonnegative integer such that $x_1 = s$ has multiplicity zero. Such an $s$ exists and, furthermore, $s \leq n-1$ 
since, without loss of generality, the hyperplane given by $x_1 = n-1$ has multiplicity zero.
Noting that the points in $T_d(n)\cap H$ still need to be covered twice, we obtain
\begin{equation}\label{eq:highdim3cover1}
    |\mathcal{C}| \geq s + f(n, d-1, 2) \geq s + n + \ceil{\frac{n}{d-1}} \geq s + n\left(1 +  \frac{1}{d-1}\right).
\end{equation} 
On the other hand, if $H_s'$ is the hyperplane given by $x_1 = s$, then the points in $T_d(n) \cap H_s'$ need to be covered at least three times, implying
\begin{equation}\label{eq:highdim3cover2}
    |\mathcal{C}| \geq s + f(n - s, d-1, 3) \geq s + \left(1 + \frac{2}{d-2}\right)(n-s) - 2 = n\left(1 + \frac{2}{d-2}\right) - s\left(\frac{2}{d-2}\right)  - 2.
\end{equation} 
Multiplying (\ref{eq:highdim3cover1}) by $\frac{2}{d-2}$ and adding it to (\ref{eq:highdim3cover2}) eliminates $s$, and the inequality simplifies to $$|\mathcal{C}| \geq \left(1+\frac{2}{d-1}\right)n - 2\left(\frac{d-2}{d}\right)\geq{\left(1+\frac{2}{d-1}\right)n - 2}. $$

We now establish the upper bound in \ref{itm:highdim3}. The 3-cover consists of the following:
\begin{itemize}
\item the hyperplane given by $x_i = j$, for $1 \leq i \leq d$ and $j\in\{0,\dots,\ceil{\frac{n}{d-1}}-1\}$,
\item the hyperplane given by $x_1+\dots+x_d = n -1 - j$, for $j\in\{0,\dots,\ceil{\frac{n}{d-1}}-1\}$.
\end{itemize}
This gives a total of $(d+1)\ceil{\frac{n}{d-1}} = \left(1+\frac{2}{d-1}\right)n + O_d(1)$ hyperplanes. Let $p = (a_1, \dots, a_d) \in T_d(n)$.
We say a coordinate of $p$ is \emph{small} if it is at most $\ceil{\frac{n}{d-1}}-1$.
Note that $p$ has at least two small coordinates. Indeed, otherwise $\sum_{i = 1}^{d} a_i \geq (d-1) \ceil{\frac{n}{d-1}} \geq n$, a contradiction.
 If $p$ has at least three small coordinates, then $p$ is covered at least three times by hyperplanes orthogonal to the standard basis vectors. Suppose that $p$ has exactly two small coordinates. Then $$\sum_{i = 1}^{d} a_i \geq (d-2)\ceil{\frac{n}{d-1}} \geq n - \ceil{\frac{n}{d-1}}.$$ It follows that $p$ is covered by $x_1+\dots+x_d=\sum_{i=1}^d a_i$. Since $p$ is also covered twice by hyperplanes orthogonal to the standard basis vectors, $p$ is covered by at least three hyperplanes in the cover, thus completing the proof.
\end{proof}

Given Conjecture~\ref{duality} and the strong connection between the fractional and integer problems in dimension two, we also investigate the fractional problem in higher dimensions.  

\begin{proposition}
\label{prop:f2d}
For every $d > 0$,
\[f^*(2,d)=1+\frac{1}{d}.\]
\end{proposition}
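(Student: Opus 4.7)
The plan is to handle both bounds by exhibiting explicit constructions, exploiting the fact that $T_d(2)$ is a tiny set: it consists of exactly $d+1$ points, namely the origin $\vec{0}$ and the $d$ standard basis vectors $e_1, \dots, e_d$.

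For the upper bound, I would construct a fractional cover by taking each of the $d$ coordinate hyperplanes $x_i = 0$ with weight $1/d$, together with the hyperplane $x_1 + \cdots + x_d = 1$ with weight $1/d$. The total weight is $d \cdot \tfrac{1}{d} + \tfrac{1}{d} = 1 + \tfrac{1}{d}$. One then checks coverage: the origin lies in all $d$ coordinate hyperplanes, contributing weight $1$, while each basis vector $e_j$ lies in the $d-1$ coordinate hyperplanes $x_i = 0$ with $i \neq j$ (contributing $(d-1)/d$) and also in $x_1 + \cdots + x_d = 1$ (contributing $1/d$), for a total of exactly $1$.

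For the lower bound, I would use the mass-assignment duality already exploited in the proofs of Theorems~\ref{1mod3} and \ref{20mod3}. Assign mass $1/d$ to each of the $d+1$ points of $T_d(2)$, giving total mass $(d+1)/d = 1 + 1/d$. The key observation is that the $d+1$ points $\vec{0}, e_1, \dots, e_d$ are affinely independent (they form the vertex set of a $d$-simplex), so no hyperplane of $\mathbb{R}^d$ can contain all of them. Hence every affine hyperplane contains at most $d$ of the points, and so carries total mass at most $d \cdot \tfrac{1}{d} = 1$. By LP duality this forces $f^*(2,d) \geq 1 + \tfrac{1}{d}$.

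There is essentially no obstacle here: the combinatorics of $T_d(2)$ is simple enough that the primal and dual constructions match immediately. The only point requiring a brief justification is the affine independence of the simplex vertices, which is standard. No case analysis on standard versus non-standard hyperplanes is needed, since the bound on the number of points per hyperplane is geometric and applies uniformly.
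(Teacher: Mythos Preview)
Your proof is correct and essentially identical to the paper's. The paper phrases the upper bound construction abstractly as ``the $d+1$ hyperplanes determined by a $d$-subset of points'' each with weight $1/d$, but these are precisely your $d$ coordinate hyperplanes $x_i=0$ together with $x_1+\cdots+x_d=1$; the lower bound via mass $1/d$ on each vertex and the simplex affine-independence observation is exactly the paper's argument as well.
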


\begin{proof}
Note that the $d+1$ points of $T_d(2)$ are the vertices of a $d$-dimensional simplex. For the upper bound, assigning weight $\frac{1}{d}$ to each of the $d+1$ hyperplanes determined by a $d$-subset of points gives a fractional cover of weight $1+\frac{1}{d}$.

For the lower bound, we assign each point in $T_d(2)$ a mass of $\frac{1}{d}$, for a total mass of $1+\frac{1}{d}$. Since any hyperplane can cover points of total mass at most $1$, we obtain the required lower bound.
\end{proof}

From Theorem~\ref{prop:1ce2ce}\ref{itm:highdim2} and Proposition \ref{prop:f2d}, we obtain $$f(n,d,2)=(1+1/d)n+O(1)=f^*(2,d)n+O(1).$$ Thus, it is tempting to suggest a more general version of Conjecture~\ref{duality}: that $f(n,d,k) = f^*(k,d)n + O_{d,k}(1)$. However, the following proposition, along with Theorem~\ref{prop:1ce2ce}~\ref{itm:highdim3}, implies that $f(n,3,3)> f^*(3,3)n+O(1)$.
\begin{proposition}\label{conjfails}
    $f^*(3,3) \leq \frac{11}{6}$.
\end{proposition}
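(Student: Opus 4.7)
The plan is to exhibit an explicit fractional cover of $T_3(3)$ whose total weight is $11/6$. Since we only need an upper bound, it suffices to describe weights that make $T_3(3)$ a fractional cover and check the two conditions (total weight $\le 11/6$ and coverage $\ge 1$ at every point). The enumeration of $T_3(3)$ is small (10 points), so the verification will be a finite check.

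The symmetric construction using only standard hyperplanes (for instance weight $1/3$ on each of the four bounding planes and $1/6$ on each of the four ``middle'' standard planes $x_i=1$ and $x+y+z=1$) gives weight $2$, which is too much. The key observation is that there exist non-standard planes containing as many as four points of $T_3(3)$: specifically, for each pair $i<j$ in $\{1,2,3\}$, the plane $x_i+x_j=1$ contains exactly the four points of $T_3(3)$ whose $i$-th and $j$-th coordinates sum to $1$. These four points are precisely the midpoints of the four edges of $T_3(3)$'s convex hull forming the $4$-cycle that avoids one pair of opposite edges. Crucially, each of the six ``edge midpoints'' of $T_3(3)$ lies on exactly two of the three planes $x_i+x_j=1$.

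With this in hand, the cover I propose is:
\begin{itemize}
\item weight $1/3$ on each of the four bounding hyperplanes $x_1=0$, $x_2=0$, $x_3=0$, $x_1+x_2+x_3=2$;
\item weight $1/6$ on each of the three non-standard hyperplanes $x_1+x_2=1$, $x_1+x_3=1$, $x_2+x_3=1$;
\item weight $0$ on all other hyperplanes.
\end{itemize}
The total weight is $4\cdot\tfrac13+3\cdot\tfrac16=\tfrac{11}{6}$, as desired.

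For the coverage check, split the ten points of $T_3(3)$ into two orbits under the permutation action on coordinates. The four ``corner'' points $(0,0,0)$, $(2,0,0)$, $(0,2,0)$, $(0,0,2)$ each lie on exactly three of the four bounding hyperplanes and on none of the three planes $x_i+x_j=1$, so each is covered with total weight $3\cdot\tfrac13=1$. The six ``edge midpoint'' points (those with exactly one or two coordinates equal to $1$ and the rest zero, like $(1,0,0)$ or $(1,1,0)$) each lie on exactly two of the bounding hyperplanes and, by the observation above, on exactly two of the three planes $x_i+x_j=1$, so each is covered with total weight $2\cdot\tfrac13+2\cdot\tfrac16=1$. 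Hence every point is covered with total weight at least $1$, completing the proof. The only non-routine step in this plan was spotting the useful non-standard planes $x_i+x_j=1$; once they are identified, the bookkeeping is a direct check on a small finite set.
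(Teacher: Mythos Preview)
Your proof is correct and uses exactly the same fractional cover as the paper: weight $\tfrac{1}{3}$ on the four bounding planes and weight $\tfrac{1}{6}$ on the three planes $x_i+x_j=1$. Your verification is in fact more detailed than the paper's, which simply states that the coverage condition is straightforward to check.
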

\begin{proof}
    The following construction implies an upper bound of $\frac{11}{6}$ for $f^*(3, 3)$:
    \begin{itemize}
    \item the planes $x_1=0, x_2=0, x_3=0$, and $x_1+x_2+x_3=2$ have weight $\frac{1}{3}$,
    \item the planes $x_1+x_2 = 1, x_1 + x_3 = 1,$ and $x_2+x_3=1$ have weight $\frac{1}{6}$.
    \end{itemize}
    It is straightforward to check that each point of $T_3(3)$ is contained in planes with total weight at least 1.
\end{proof}

We are, however, able to obtain an upper bound on $f(n, d, k)$ for all $d, k$. The construction is inspired by features of the various upper bound constructions so far. In particular, the cover consists only of standard hyperplanes. If $h_1, h_2$ are parallel standard hyperplanes with $|h_1 \cap T_d(n)| \leq |h_2 \cap T_d(n)|$, then the multiplicity of $h_1$ is at most the multiplicity of $h_2$. Furthermore, the hyperplanes in each direction can be partitioned into contiguous blocks of equal size, where the hyperplanes in each block have the same multiplicity. 
\begin{proposition}\label{upboundall}
   Suppose $d$ is odd and $k= (d+1)q/2 + r$ with $q, r \in \ZZ$ and $1\leq r \leq (d+1)/2$. Then $f(n,d,k)\leq{C_{d,k}n}+O_{d,k}(1)$ where 
   \[ C_{d,k}=(q+1)\left(1+\frac{r-1}{(d+1)(q+2)/2 - (r-1)}\right). \]
   Suppose $d$ is even and $k=(d+1)q+r$ with $q, r \in \ZZ$ and $1\leq r\leq {d+1}$. Then $f(n,d,k)\leq{C_{d,k}n}+O_{d,k}(1)$ where
$$ C_{d,k}=\begin{dcases} (2q+1)\left(1 +\frac{r-1}{(d+1)(q+1) - (r-1)} \right)&\mbox{if }1\leq{r}\leq{d/2+1}\mbox{,}\\
(2q+3)\left(1-\frac{d+2-r}{(d+1)(q+1)+(d+2-r)}\right)&\mbox{otherwise.}
\end{dcases}$$
\end{proposition}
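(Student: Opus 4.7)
The plan is to extend the lifting technique of Proposition~\ref{upper} to arbitrary dimensions. The $k$-cover $\mathcal{C}$ will consist only of standard hyperplanes, partitioned in each of the $d+1$ standard directions into $N$ consecutive blocks $B_1, \ldots, B_N$ of $\lceil n/M\rceil$ hyperplanes, where the hyperplanes in $B_u$ lie at positions $(u-1)\lceil n/M\rceil$ through $u\lceil n/M\rceil-1$ from the nearest bounding hyperplane. Each hyperplane in $B_u$ is assigned multiplicity $m_u := N - u + 1$, while all remaining hyperplanes get multiplicity $0$. The total size is then
\[ |\mathcal{C}| = (d+1)\,\lceil n/M\rceil \cdot \sum_{u=1}^{N}(N-u+1) = (d+1)\,\lceil n/M\rceil \cdot \frac{N(N+1)}{2}. \]

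I would choose the positive integers $N, M$ so as to satisfy the two design identities
\[ (d+1)N - M + 1 = k \qquad\text{and}\qquad \frac{(d+1)\,N(N+1)}{2M} = C_{d,k}, \]
which together give $|\mathcal{C}| = C_{d,k}\, n + O_{d,k}(1)$ as desired. Specifically:
\begin{itemize}
    \item when $d$ is odd, set $N = q+1$ and $M = (d+1)(q+2)/2 - (r-1)$;
    \item when $d$ is even and $1 \leq r \leq d/2+1$, set $N = 2q+1$ and $M = (d+1)(q+1) - (r-1)$;
    \item when $d$ is even and $d/2+2 \leq r \leq d+1$, set $N = 2q+2$ and $M = (d+1)(q+1) + (d+2-r)$.
\end{itemize}
Verifying both identities in each case is routine algebra against the stated formulas for $C_{d,k}$.

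The core of the proof is showing that $\mathcal{C}$ really is a $k$-cover. Fix $p=(a_1,\dots,a_d) \in T_d(n)$ and set $a_{d+1} := n-1-\sum_{i=1}^{d} a_i \geq 0$, so that the $d+1$ standard hyperplanes through $p$ correspond bijectively to the indices $i \in \{1,\dots,d+1\}$. Setting $\ell := \lceil n/M\rceil$, the hyperplane indexed by $i$ has positive multiplicity in $\mathcal{C}$ precisely when $a_i < N\ell$, in which case it lies in block $B_{u_i}$ with $u_i := \lfloor a_i/\ell\rfloor + 1$. With $I := \{i : a_i < N\ell\}$, the multiplicity of coverage of $p$ equals
\[ \sum_{i\in I}(N+1-u_i) \;=\; |I|N - \sum_{i\in I}(u_i-1). \]
Combining $(u_i-1)\ell \le a_i$ for $i\in I$ and $a_i \ge N\ell$ for $i\notin I$ with the identity $\sum_{i=1}^{d+1} a_i = n-1 < M\ell$ yields $\sum_{i\in I}(u_i-1) < M-(d+1-|I|)N$; substituting back shows the coverage strictly exceeds $(d+1)N-M$, hence is at least $(d+1)N-M+1 = k$. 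That $I$ is nonempty follows from $(d+1)N \ge M$, which holds whenever $k \ge 1$.

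The principal obstacle is bookkeeping rather than conceptual: one must verify in each of the three cases that the chosen pair $(N,M)$ simultaneously satisfies both design identities. The first identity is the structural heart of the coverage argument, making the worst-case multiplicity bound exactly $k$, while the second matches the asymptotic size of the cover to the claimed coefficient $C_{d,k}$.
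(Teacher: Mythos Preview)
Your proposal is correct and follows essentially the same construction as the paper: the same choices of $N$ and $M$ in each case, the same block structure with multiplicities $N-u+1$, and the same counting. Your verification is in fact slightly cleaner than the paper's, since by introducing $a_{d+1}=n-1-\sum_{i=1}^d a_i$ you treat all $d+1$ standard directions symmetrically and reduce the coverage check to the single identity $(d+1)N-M+1=k$, whereas the paper separates out the diagonal direction and argues by cases on $\sum_i c_i$.
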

\begin{proof}
Suppose $d$ is odd and $k= (d+1)q/2 + r$ with $q, r \in \ZZ$ and $1\leq r \leq (d+1)/2$. Set $M := (d+1)(q+2)/2 - (r-1)$. For each $j \in \{1, \dots, q+1\}$, let $B_j$ consist of the following :
\begin{itemize}
\item hyperplanes given by $x_i = c$, for $1 \leq i \leq d$ and $c\in\{(j-1)\ceil{\frac{n}{M}},\dots,j\ceil{\frac{n}{M}} - 1\}$,
\item hyperplanes given by $x_1+\dots+x_d = n -1 - c$, for $c\in\{(j-1)\ceil{\frac{n}{M}},\dots,j\ceil{\frac{n}{M}} - 1\}$.
\end{itemize}
Note that, for $n$ large enough, each set $B_j$ consists of $\ceil{\frac{n}{M}}$ standard hyperplanes in each of $d+1$ directions. Consider the $k$-cover $\mathcal{C}$ which contains, for each $j$, the hyperplanes in $B_j$ with multiplicity $q + 2 - j$, and no other hyperplane with positive multiplicity. Hence 
\begin{align*}
|\mathcal{C}| &= (d+1)\ceil{\frac{n}{M}}\sum_{j = 1}^{q+1} j = (d+1)\left(\frac{n}{M}+O(1)\right)\left(\frac{(q+1)(q+2)}{2}\right) \\
& = (q+1)\left(\frac{(q+2)(d+1)}{2M}\right)n +O_{d, k}(1) = (q+1)\left(1 + \frac{r-1}{M}\right)n +O_{d, k}(1).
\end{align*}

It remains to verify $\mathcal{C}$ is a $k$-cover of $T_d(n)$. Let $p = (a_1, \dots, a_d) \in T_d(n)$ and suppose that, for each~$i$, $p$ is covered $c_i$ times by the hyperplane given by $x_i = a_i$. Then
\[
\sum_{i = 1}^{d}a_i \geq \ceil{\frac{n}{M}}\left(d(q+1)-\sum_{i=1}^d c_i\right).
\]
If $\sum_{i=1}^d c_i\geq {k}$, then $p$ is covered at least $k$ times. On the other hand, if $\sum_{i=1}^d c_i < k- (q+1)$, then
$$\sum_{i = 1}^{d}a_i \geq \ceil{\frac{n}{M}}\left(d(q+1)- (k - q - 2) \right) = \ceil{\frac{n}{M}} M \geq n, $$
a contradiction.
We may now assume that $\sum_{i=1}^d c_i = k-a$ with $1\leq{a}\leq{q+1}$. Then 
\begin{align*}
\sum_{i = 1}^{d}a_i \geq \ceil{\frac{n}{M}}\left(d(q+1) - k + a \right) = \ceil{\frac{n}{M}}\left(a - q - 2 + M \right) \geq n - \ceil{\frac{n}{M}}(q + 2 - a).
\end{align*}
Hence, $p$ is covered at least $(q + 2) - (q + 2 - a) = a$ times by the hyperplane given by $x_1 + \dots x_d = \sum_{i=1}^d a_i$, and so is covered at least $k$ times.

Suppose now that $d$ is even. Here, we only give the $k$-cover. The verification proceeds similarly to the case when $d$ is odd.

Assume $k= (d+1)q + r$ with $q, r \in \ZZ$ and $1\leq r \leq d/2 + 1$. Set $M := (d+1)(q+1) - (r-1)$. For each $j \in \{1, \dots, 2q+1\}$, let $B_j$ consist of the following:
\begin{itemize}
\item hyperplanes given by $x_i = c$, for $1 \leq i \leq d$ and $c\in\{(j-1)\ceil{\frac{n}{M}},\dots,j\ceil{\frac{n}{M}} - 1\}$,
\item hyperplanes given by $x_1+\dots+x_d = n -1 - j$, for $c\in\{(j-1)\ceil{\frac{n}{M}},\dots,j\ceil{\frac{n}{M}} - 1\}$.
\end{itemize}
The $k$-cover for this case contains, for each $j$, the hyperplanes in $B_j$ with multiplicity $2q+2-j$. Every other hyperplane has multiplicity zero.

Finally, assume $k= (d+1)q + r$ with $q, r \in \ZZ$ and $d/2 + 1 < r \leq d+1$. Set $M := (d+1)(q+1)+(d+2-r)$. For each $j \in \{1, \dots, 2q+2\}$, let $B_j$ consist of the following:
\begin{itemize}
\item hyperplanes given by $x_i = c$, for $1 \leq i \leq d$ and $c\in\{(j-1)\ceil{\frac{n}{M}},\dots,j\ceil{\frac{n}{M}} - 1\}$,
\item hyperplanes given by $x_1+\dots+x_d = n -1 - j$, for $c\in\{(j-1)\ceil{\frac{n}{M}},\dots,j\ceil{\frac{n}{M}} - 1\}$.
\end{itemize}
The $k$-cover $\mathcal{C}$ contains, for each $j$, the hyperplanes in $B_j$ with multiplicity $2q + 3 - j$. Every other hyperplane has multiplicity zero. \end{proof}

Finally, when $d$ is sufficiently large compared to $k$, we obtain an asymptotic formula for $f(n,d,k)$.

\begin{theorem}\hfill\label{3below}
\begin{enumerate}[label = (\alph*)]
\item \label{3below:a} If $k \geq 2$ and $d \geq 2k-3$, then $$f(n,d,k) = \left(1+\frac{k-1}{d-k+2}\right)n+O_{d,k}(1),$$ 
\item \label{3below:b} If $k \geq 3$ and $2k-3\ge d\ge k-2$, then
\[
f(n,d,k)=\left(2+\frac{2k-3-d}{2d+3-k}\right)n+O_{d,k}(1).
\]
\end{enumerate}
\end{theorem}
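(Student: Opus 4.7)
The upper bound in both parts follows directly from Proposition~\ref{upboundall}. In part~\ref{3below:a} with $d \geq 2k-3$, the relevant parameters are $q = 0, r = k$ when $d \geq 2k-2$, and $q = 1, r = 1$ when $d = 2k-3$; in each case $q \leq 1$ and the coefficient $C_{d,k}$ simplifies to $1 + (k-1)/(d-k+2)$. Similarly, in part~\ref{3below:b} with $k-2 \leq d \leq 2k-3$, a short case analysis on $(q, r)$ reduces $C_{d,k}$ to $2 + (2k-3-d)/(2d+3-k)$.

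For the lower bound, I proceed by induction on $k$, with base cases $k \leq 3$ supplied by Theorem~\ref{prop:1ce2ce}, and within each $k$ by a secondary induction on $d$ together with $n$. Given an optimal $k$-cover $\mathcal{C}$ of $T_d(n)$, Observation~\ref{boundary} lets us assume every bounding hyperplane has multiplicity less than $\lceil \alpha_{d,k}\rceil$, where $\alpha_{d,k}$ is the target coefficient. The main tool is a multi-threshold argument generalizing the $k=3$ proof in Theorem~\ref{prop:1ce2ce}\ref{itm:highdim3}: working in a single coordinate direction (say $x_1$), for $i = 1, \dots, k$ let $s_i = \min\{s \geq 0 : \mathrm{mult}_\mathcal{C}(x_1 = s) < i\}$, so $s_1 \leq \cdots \leq s_k$. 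The total $x_1$-multiplicity in $\mathcal{C}$ is at least $\sum_j s_j$, and by Observation~\ref{d-2} the slice $T_d(n) \cap \{x_1 = s_i\} \cong T_{d-1}(n - s_i)$ needs at least $k - i + 1$ additional covers from non-parallel hyperplanes. This yields the family of inequalities
\[
|\mathcal{C}| \;\geq\; \sum_{j=1}^{k} s_j \;+\; f\bigl(n - s_i,\, d - 1,\, k - i + 1\bigr), \qquad i = 1, \dots, k.
\]
Applying the inductive hypothesis and taking a convex combination with non-negative weights $\lambda_i$ summing to $1$, chosen so that the coefficient of each $s_j$ is non-negative and $\sum_i \lambda_i\, \alpha_{d-1, k-i+1} = \alpha_{d,k}$, produces $|\mathcal{C}| \geq \alpha_{d,k}\, n + O_{d,k}(1)$. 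For part~\ref{3below:a} when $d \geq 2k-2$, the weights $\lambda_1 = (d-k+1)/d$ and $\lambda_2 = (k-1)/d$ (with $\lambda_i = 0$ for $i \geq 3$) satisfy these conditions, since then $\lambda_1\alpha_{d-1,k} = 1$ and $\lambda_2\alpha_{d-1,k-1} = (k-1)/(d-k+2)$, summing precisely to $(d+1)/(d-k+2) = \alpha_{d,k}$.

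The principal obstacle is the boundary case $d = 2k-3$ of part~\ref{3below:a} (shared with part~\ref{3below:b}) and the full range of part~\ref{3below:b}. Here the inductive hypothesis at $(d-1, k)$ lies in part~\ref{3below:b}, so $\alpha_{d-1, k}$ is the larger quantity $3d/(2d+1-k)$, and the simple two-weight combination above violates the feasibility constraint $\lambda_1 \alpha_{d-1,k} \leq 1$. Overcoming this requires either activating more of the $\lambda_i$ (with the weights determined by solving the linear system that sets the $s_j$-coefficients to zero and matches the target $\alpha_{d,k}$), exploiting the $n$-induction of Observation~\ref{boundary} more carefully by conditioning on the exact multiplicity profile of the bounding hyperplanes, or averaging the threshold inequalities across all $d+1$ coordinate directions to take advantage of the symmetry. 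The base case $d = k-2$ of part~\ref{3below:b}, where neither the $d$-induction nor part~\ref{3below:a} is available, will likely require a direct face-counting argument in the spirit of the $f(n,3,3) \geq 2n - 2$ bound in Theorem~\ref{prop:1ce2ce}\ref{itm:highdim3}, using that each non-bounding hyperplane meets at most $d$ of the $d+1$ faces of $T_d(n)$ together with Theorem~\ref{2int} to cover the residual interior.
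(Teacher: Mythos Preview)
Your upper bounds via Proposition~\ref{upboundall} and your lower bound for part~\ref{3below:a} in the range $d \geq 2k-2$ are correct and agree with the paper; the paper uses a single threshold $s$ together with the bounding hyperplane (your $s_1$ and $s_2=0$ once bounding multiplicity is forced to equal one) and combines the two resulting inequalities with the same weights. One small slip: with your definition the $s_i$ are non-\emph{increasing} in $i$, since ``multiplicity $<i$'' is a weaker condition for larger $i$.

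The real gap is that you stop at exactly the point where the work lies: the case $d=2k-3$ of part~\ref{3below:a} and all of part~\ref{3below:b} are only discussed, not proved. The paper does \emph{not} proceed by a richer convex combination of your one-direction threshold inequalities. For the bulk of part~\ref{3below:b}, namely $k-2\le d\le 2k-6$, it reduces to bounding multiplicity at most two and applies the face-counting inequality
\[
|\mathcal{C}|\ \ge\ \frac{d+1}{d}\,f(n-d,\,d-1,\,k-2),
\]
coming from the fact that each of the $d+1$ face interiors is a copy of $T_{d-1}(n-d)$ still needing $k-2$ covers while any hyperplane can help at most $d$ of them; the induction on $k$ (jumping down by \emph{two}, not one) then closes exactly. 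The remaining values $d=2k-5$ and $d=2k-4$ are handled by a two-threshold contradiction argument in the style of the $k=3$ case of Theorem~\ref{2int}, feeding in the just-established smaller-$d$ bounds. The case $d=2k-3$ of part~\ref{3below:a} is treated separately by first proving the auxiliary bound $f(n,2k-4,k)\ge 2n+O_k(1)$ via face-counting with bounding multiplicity one, then bootstrapping. Your ``averaging across directions'' hint gestures at the right mechanism, but the precise face-counting step and the particular induction structure (using $(d-1,k-2)$ rather than $(d-1,k-1)$) are the missing ingredients.
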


\begin{proof}[Proof of Theorem \ref{3below}] We begin with the proof of the lower bound for \ref{3below:a}. By Theorem~\ref{prop:1ce2ce}, the statement holds for $k\in\{2,3\}$. Now assume $k>3$ and proceed by induction.

We first prove that $f(n,2k-3,k)\geq{2n+O_k(1)}$. In order to establish a lower bound of the form ${2n+O_k(1)}$, we may assume, by Observation~\ref{boundary}, that each bounding hyperplane has multiplicity at most one. If some bounding hyperplane has multiplicity zero in a $k$-cover $\mathcal{C}$ of $T_{2k-3}(n)$, Observation~\ref{d-2} gives that $|\mathcal{C}|\geq{f(n,2k-4,k)}$.

We first establish that $f(n,2k-4,k)\ge 2n+O_k(1)$, allowing us to assume that each bounding hyperplane has multiplicity exactly one.
By the induction hypothesis, we have that $f(n,2k-5,k-1)=2n+O_k(1)$, so $f(n, 2k-5, k)\geq{2n}+O_{k}(1)$. To establish a lower bound of the form $2n+O_k(1)$ for $f(n,2k-4,k)$, we may assume, by Observation \ref{boundary}, that each bounding hyperplane is used at most once. 
Let $\mathcal{C'}$ be a $k$-cover of $T_{2k-4}(n)$. If some bounding hyperplane has multiplicity zero in $\mathcal{C'}$, Observation \ref{d-2} gives that $|\mathcal{C'}|\geq{f(n,2k-5,k)}\geq{2n}+O_k(1)$. Thus, we may assume that every bounding hyperplane has multiplicity exactly one in $\mathcal{C'}$. Observe that for any bounding hyperplane $H$, $H\cap T_{2k-4}(n)$ is a copy of $T_{2k-5}(n)$, the interior points of which constitute a copy of $T_{2k-5}(n-(2k-4))$ which still needs to be covered $k-1$ times. By Observation \ref{d-2}, this requires at least $f(n-(2k-4),2k-5,k-1)$ hyperplanes and any non-bounding hyperplane in $\mathcal{C'}$ can intersect at most $2k-4$ of the faces of $T_{2k-4}(n)$, so we obtain
 \begin{align*}
    |\mathcal{C}'| & \geq \left(\frac{2k-3}{2k-4}\right)f(n - (2k-4), 2k-5, k-1)\\ 
    & \geq \left(\frac{2k-3}{2k-4}\right) (2 (n - (2k - 4)) + O_k(1)) \\
    & \geq \left(1 + \frac{1}{2k-4}\right)2n + O_k(1).    
    \end{align*}
    Hence we obtain that $f(n,2k-4,k) \geq {2n+O_k(1)}$.
    
From here on we may now assume that each bounding hyperplane in $\mathcal{C}$ has multiplicity exactly one. The intersection of a bounding hyperplane with $T_{2k-3}(n)$ is a copy of $T_{2k-4}(n)$, the interior points of which constitute a copy of $T_{2k-4}(n-(2k-3))$ which still needs to be covered $k-1$ times. This requires at least $f(n-(2k-3), 2k-4, k-1)$ hyperplanes and any non-bounding hyperplane in $\mathcal{C}$ can intersect at most $2k-3$ of the faces of $T_{2k-3}(n)$, so we obtain 
\begin{align*}
    f(n, 2k-3, k) & \geq \left(\frac{(2k-3)+1}{2k-3}\right)f(n - (2k-3), 2k-4, k-1)\\ 
    & \geq \left(\frac{2k-2}{2k-3}\right) \left(\left(1 + \frac{k-2}{k-1}\right)n + O_k(1)\right) \\
    & = 2n + O_k(1).    
\end{align*}
Suppose now that $d > 2k-3$ and proceed by induction on $d$. In order to establish the lower bound, by Observation \ref{boundary}, we may assume that each bounding hyperplane has multiplicity at most one. Furthermore, if the hyperplane $H$ given by $x_1=0$ is not contained in a $k$-cover $\mathcal{C}$, we have, by Observation \ref{d-2} that $$|\mathcal{C}|\geq{f(n,d-1,k)}= \left(1+\frac{k-1}{d-k+1}\right)n+O_{d,k}(1) \geq {\left(1+\frac{k-1}{d-k+2}\right)n+O_{d,k}(1)} .$$
Thus, we can assume that the multiplicity of $H$ in $\mathcal{C}$ is exactly one. Let $s$ be the smallest nonnegative integer such that $x_1 = s$ has multiplicity zero. Such an $s$ exists and, furthermore, $s \leq n-1$ 
    since, without loss of generality, the hyperplane given by $x_1 = n-1$ has multiplicity zero.
Since the points in $T_d(n)\cap H$ still need to be covered $k-1$ times,
    \begin{equation}\label{eq:highdimkcover1}
        |\mathcal{C}| \geq s + f(n, d-1, k-1) \geq s + \left(1 + \frac{k-2}{d-k+2}\right)n + O_{d,k}(1).
    \end{equation} 
    On the other hand, if $H_s'$ is the hyperplane given by $x_1 = s$, then the points in $T_d(n) \cap H_s'$ need to be covered at least $k$ times, implying
    \begin{equation}\label{eq:highdimkcover2}
        |\mathcal{C}| \geq s + f(n - s, d-1, k) \geq s + \left(1 + \frac{k-1}{d-k+1}\right)(n-s) + O_{d,k}(1).
    \end{equation} 

    Multiplying (\ref{eq:highdimkcover1}) by $\frac{k-1}{d-k+1}$ and adding it to (\ref{eq:highdimkcover2}) eliminates $s$, and the inequality simplifies to
    $$|\mathcal{C}| \geq \left(1 + \frac{k-1}{d-k+2}\right)n + O_{d,k}(1). $$
This completes the proof of the lower bound for \ref{3below:a}.

For the upper bound for \ref{3below:a}, we rely on Proposition \ref{upboundall}. If $d=2k-3$, then $d$ is odd and we have $k= (d+1)q/2 + r$ with $q= r =1$. If $d\geq{2k-2}$ and $d$ is odd, then $k= (d+1)q/2 + r$ with $q = 0, r = k$. If $d\geq{2k-2}$ and $d$ is even, then $k = q(d+1) + r$ with $q = 0$ and $r = k \leq d/2 + 1$. In all cases, Proposition \ref{upboundall} implies the desired upper bound.

     \emph{Proof of~\ref{3below:b}:} For the lower bound, we proceed via induction on $k$ where the base case of $k=3$ has already been established --- $d=1$ is straightforward, $d=2$ is given by Theorem \ref{2int}, and $d=3$ comes from Theorem \ref{prop:1ce2ce}\ref{itm:highdim3}.
For a given $k$, we first assume $k-2\le d\le 2k-6$. By Observation \ref{boundary}, it suffices to assume each bounding hyperplane has multiplicity at most two in the cover $\mathcal{C}$. For any bounding hyperplane $H$, $H\cap T_d(n)$ is a copy of $T_{d-1}(n)$, the interior points of which constitute a copy of $T_{d-1}(n-d)$, which still needs to be covered at least $k-2$ additional times. By Observation~\ref{d-2}, this requires at least $f(n-d,d-1,k-2)$ hyperplanes and any non-bounding hyperplane in $\mathcal{C}$ can intersect at most $d$ faces of $T_d(n)$, so we obtain
\[
|\mathcal{C}|\ge \left(\frac{d+1}{d}\right)f(n-d,d-1,k-2).
\]

If $k=4$, then $d=2$, and we already have the desired result from Theorem \ref{2int}. Otherwise, we note that $k-2\ge{3}$ and $2(k-2)-3\ge d-1 \ge (k-2)-2$. The induction hypothesis gives:
\begin{align*}
    |\mathcal{C}|& \ge \left(\frac{d+1}{d}\right)\left(\left(2+\frac{2(k-2)-3-(d-1)}{2(d-1)+3-(k-2)}\right)(n-d)+O_{d,k}(1)\right)\\
    &=\left(2+\frac{2k-3-d}{2d+3-k}\right)n+O_{d,k}(1),
\end{align*}
as desired.

For $d=2k-5$, we let $s$ (resp. $t$) be the smallest non-negative integer such that the hyperplane $x_1=s$ (resp. $x_1=t$) has multiplicity at most one (resp. has multiplicity zero) in $\mathcal{C}$. This yields
\[|\mathcal{C}|\ge s+t +f (n-s,d-1,k-1)\]
and \[|\mathcal{C}|\ge s+t +f(n-t,d-1,k).\] The asymptotics for $f(n-s,d-1,k-1)$ and $f(n-t,d-1,k)$ are given by the induction hypothesis. We may then assume that $|\mathcal{C}|<\left(2+\frac{2k-3-d}{2d+3-k}\right)n+O_k(1)$ and proceed as in the proof of the $k=3$ case of Theorem \ref{2int} to derive a contradiction. Having now established that the induction hypothesis holds for $d=2k-5$, we repeat the exact same process for $d=2k-4$. We omit the details of these calculations.

For the upper bound, we again rely on Proposition \ref{upboundall}. Suppose $d = k-2$. If $d$ is odd, we have $k= (d+1)q/2 + r$ with $q = 2, r=1$. If $d$ is even, we have $k=(d+1)q+r$ with $q=1, r=1$. Suppose now that $k-2 < d \leq 2k-3$. If $d$ is odd, then $k= (d+1)q/2 + r$ with $q = 1, r = k - (d + 1/2)$. If $d$ is even, then $k = q(d+1) + r$ with $q = 0$ and $r = k > d/2 + 1$. In all cases, Proposition \ref{upboundall} implies the desired upper bound.

\end{proof}

Finally, we remark that for fixed $n$ and $k$, the function $f(n,d,k)$ is non-increasing in $d$ --- any collection of hyperplane equations constituting a $k$-cover in dimension $d$ also gives a $k$-cover in dimension $d+1$. Along with Theorem \ref{3below}\ref{3below:b}, this establishes that $f(n,d,k)\ge 3n+O_k(1)$ for $d<k-2$.

\section{Conclusion and open questions} \label{sec:oq}

In Proposition~\ref{upper}, we obtained the upper bound $f(n,2,k)\le f^*(k,2)n+O_k(1)$. In Section~\ref{sec:highdim}, we show that, for $d \geq 3$, it is not true that $f(n,d,k)\leq{f^*(k,d)n+O_{d,k}(1)}$. Perhaps instead, it is the case that the solution to the problem of fractionally covering $T_d(k)$ gives a lower bound for the $k$-covering problem on $T_d(n)$.

\begin{question}
    \label{q:cdkfractionalnew}
    Is $f(n,d,k)\ge f^*(k,d)n$ for all $n,d,k$?
\end{question}

A positive answer to Question~\ref{q:cdkfractionalnew} for $d=2$ would establish Conjecture \ref{duality}.

We have shown asymptotics for $f(n,3,k)$ for $k\le 5$ and similar techniques yield $f(n,3,7)=4n+O(1)$. On the basis of these results and computations performed with Gurobi~\cite{G}, we pose the following conjecture.

\begin{conjecture} \label{c:d=3}
For all $n, k \geq 1$,
\[f(n,3,k)=
\begin{dcases}
\left(\frac{k+1}{2}\right)n+O_k(1)&\mbox{if } k \mbox{ is odd,}\\
\left(\frac{k(k+2)}{2(k+1)}\right)n+O_k(1)&\mbox{if } k \mbox{ is even.}
\end{dcases}\]
\end{conjecture}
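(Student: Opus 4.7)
The upper bound follows from Proposition \ref{upboundall}. Writing $d=3$ in the odd-$d$ case of that proposition and setting $k = 2q+r$ with $r \in \{1, 2\}$, one obtains $C_{3,k} = (q+1)(1 + 0) = (k+1)/2$ for odd $k$ (taking $r=1$), and $C_{3,k} = (q+1)(2q+4)/(2q+3) = k(k+2)/(2(k+1))$ for even $k$ (taking $r=2$), matching the conjecture. Throughout what follows, let $\alpha_k$ denote this leading coefficient.

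For the lower bound, I would argue by induction on $k$, taking as base cases $k \leq 3$ from Theorem \ref{prop:1ce2ce} together with the $k \in \{4, 5, 7\}$ cases asserted in the text preceding the conjecture. The inductive step would proceed, for fixed $k$, by induction on $n$. By Observation \ref{boundary} one may assume each bounding hyperplane in a minimum $k$-cover $\mathcal{C}$ of $T_3(n)$ has multiplicity at most $\lceil \alpha_k \rceil - 1$; write $m_1, m_2, m_3, m_4$ for these multiplicities. The argument then mimics the face-interior analysis used in Theorem \ref{prop:1ce2ce}\ref{itm:highdim3}: each face interior is a copy of $T_2(n-3)$ that must receive $k - m_d$ additional covers from lines induced by non-bounding hyperplanes of $\mathcal{C}$. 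Together with a bound on the number of face interiors a single non-bounding hyperplane can intersect, this should yield inequalities sufficient to recover $\alpha_k n + O_k(1)$ after optimizing over $(m_1, \dots, m_4)$.

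Two technical obstacles arise. First, the approach requires sharp lower bounds on $f(n-3, 2, k - m_d)$ for $k - m_d \geq 5$; these constitute essentially Conjecture \ref{duality} and are established only for $k-m_d \leq 4$ by Theorem \ref{2int}. One could attempt to invoke Theorem \ref{partialpr}, but the line-arrangements induced on faces by a 3D cover need not satisfy its monotonicity hypothesis, so a strengthening (or a direct proof of enough of Conjecture \ref{duality}) appears necessary. Second, the $k=2$ case in the proof of Theorem \ref{prop:1ce2ce}\ref{itm:highdim2} implicitly uses that a non-bounding hyperplane intersects at most $d$ face interiors of $T_d(n)$; this requires verification at higher $k$, because non-standard hyperplanes such as $x_1 + x_2 = c$ (for $2 \leq c \leq n-2$) can intersect the interiors of all four faces of $T_3(n)$. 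A weighted accounting, separating standard non-bounding hyperplanes (parallel to one bounding direction, and so intersecting at most three interiors) from non-standard ones (where the total number of interior points they cover is still limited), should yield the needed bound.

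The main hurdle I anticipate is the extremal case in which every $m_d = \lceil \alpha_k \rceil - 1$. For odd $k$ this value equals $\alpha_k$ exactly: the four bounding hyperplanes contribute $4\alpha_k$ to $|\mathcal{C}|$, and inducting on $n$ via the interior $T_3(n-4)$ yields $|\mathcal{C}| \geq 4\alpha_k + \alpha_k(n-4) - O(1) = \alpha_k n - O(1)$, closing the induction. For even $k$, the fractional part of $\alpha_k$ introduces a per-step loss of $4\{\alpha_k\}$, and the naive induction fails. A plausible workaround is a two-parameter $(s,t)$-style analysis tracking the smallest indices $s, t$ for which the standard hyperplanes $x_1 = s$ and $x_1 = t$ have multiplicity below certain thresholds, analogous to the proof of $f(n, 2, 3) = \lceil 9n/4 \rceil$ in Theorem \ref{2int}. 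Alternatively, one might pursue a direct integer mass-distribution argument on $T_3(n)$ that exploits integrality, since Proposition \ref{conjfails} shows the pure fractional bound $f^*(k, 3)n$ is too weak; this would require novel ideas beyond the 2D mass constructions of Theorems \ref{1mod3} and \ref{20mod3}.
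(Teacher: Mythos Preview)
The statement you are attempting to prove is a \emph{conjecture} in the paper, not a theorem: the authors explicitly introduce it as such, motivated by the established small-$k$ cases and Gurobi computations. There is therefore no proof in the paper to compare against. The upper bound direction is indeed settled by Proposition~\ref{upboundall}, exactly as you compute; what remains open is the lower bound for general $k$.

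Your proposal is not a proof but an outline, and you yourself identify the gaps that prevent it from closing. The key one is that the face-interior analysis you sketch requires lower bounds of the form $f(n,2,k') \ge f^*(k',2)n + O_{k'}(1)$ for all $k' \le k$, i.e.\ essentially the lower-bound half of Conjecture~\ref{duality}, which the paper establishes only for $k' \le 4$ (Theorem~\ref{2int}) and under restrictive hypotheses (Theorem~\ref{partialpr}). Since the line arrangement induced on a face by a three-dimensional cover need not consist of standard lines with monotone multiplicities, Theorem~\ref{partialpr} does not apply, and you correctly flag this. The second obstacle you raise is also real: non-standard hyperplanes in $\mathbb{R}^3$ can meet the interiors of all four faces, so the simple ``at most $d$ faces'' count does not go through directly. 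Finally, your observation that the even-$k$ case incurs a per-step loss under naive induction is accurate; the $(s,t)$-threshold idea you propose is in the spirit of the paper's $k=3$ argument in Theorem~\ref{2int} and the $d=2k-5,\,2k-4$ cases of Theorem~\ref{3below}\ref{3below:b}, but making it work for arbitrary even $k$ would require new ideas. In short, your diagnosis of the difficulties is sound, but the proposal does not resolve them, and the conjecture remains open.
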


If Conjecture \ref{c:d=3} holds, we note that $f(n,3,k)$ is linear in $n$ up to a constant term and that the slope is integral when $k\equiv 1 \pmod{2}$. Recall that we have established that $f(n,5,1)=n$, $f(n,5,4)=2n+O(1)$, and $f(n,5,7)=3n+O(1)$. Similarly, we also have $f(n,7,1)=n, f(n,7,5)=2n+O(1)$, and $f(n,7,9)=3n+O(1)$. This leads to the next question.
\begin{question}\label{q:1modhalf}
    Is $f(n,5,3m+1)=(m+1)n+O_m(1)$ for all $m$? More generally, for all $m$ and $b$, is $f(n,2b-1,bm+1)=(m+1)n+O_m(1)$?
\end{question}

In Proposition \ref{upboundall}, we established a linear upper bound on $f(n,d,k)$, and we have the trivial linear lower bound $f(n,d,k)\geq f(n,d)=n$. We conjecture that $f(n,d,k)$ is always linear in $n$.
\begin{conjecture}\label{c:constantexists}
    For all $d,k$, there exists a constant $C_{d,k}$ such that
    \[
    f(n,d,k)=C_{d,k}n+O_{d,k}(1).
    \]
\end{conjecture}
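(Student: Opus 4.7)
The plan is to establish this conjecture through an approximate subadditivity argument for $f(n, d, k)$, combined with the linear upper bound of Proposition \ref{upboundall} and the trivial lower bound $f(n, d, k) \geq f(n, d) = n$ from Theorem \ref{prop:1ce2ce}\ref{itm:highdim1}. Because both bounds are linear, both $\liminf_{n\to\infty} f(n, d, k)/n$ and $\limsup_{n\to\infty} f(n, d, k)/n$ lie in a fixed interval $[1, C^{\mathrm{up}}_{d,k}]$. The conjecture then amounts to two claims: that these match (the limit $C_{d,k}$ exists), and that the deviation $f(n,d,k) - C_{d,k}n$ is bounded.

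The main technical step I would attempt is a quasi-subadditivity inequality
\[
f(n+m, d, k) \leq f(n, d, k) + f(m, d, k) + K_{d,k}
\]
for a constant $K_{d,k}$ independent of $n$ and $m$. Granted this, $a_n := f(n,d,k) + K_{d,k}$ is genuinely subadditive, so Fekete's lemma gives $C_{d,k} = \lim_{n\to\infty} a_n/n = \inf_n a_n/n$. To construct the $k$-cover of $T_d(n+m)$ witnessing this inequality, I would decompose $T_d(n+m)$ along the bounding hyperplane $x_1 + \cdots + x_d = n - 1$: the ``inner'' region $\{x_1 + \cdots + x_d \leq n-1\}$ is precisely $T_d(n)$, covered by an optimal $\mathcal{C}_n$; the ``outer'' slab $\{n \leq x_1 + \cdots + x_d \leq n+m-1\}$ would be handled by applying an affine transformation that sends a natural sub-simplex of it onto $T_d(m)$ (so that the image of an optimal $\mathcal{C}_m$ covers most of the slab), with the remaining ``seam'' absorbed into $O_{d,k}(1)$ additional bounding hyperplanes of multiplicity $k$.

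The main obstacle is twofold. First, no single affine transformation identifies the trapezoidal outer slab with a triangular grid, so one must carefully locate the embedded copy of $T_d(m)$ and show the residual seam lives essentially on a $(d{-}1)$-dimensional face of the ambient simplex, so that $K_{d,k}$ does not depend on $n$ or $m$; proving this likely requires induction on $d$ combined with the known lower-dimensional case. Second, subadditivity alone only yields $f(n,d,k) = C_{d,k} n + o(n)$ rather than the $O_{d,k}(1)$ error the conjecture demands; to upgrade, one would want the approximate subadditivity to be nearly tight, or equivalently, a ``periodicity extraction'' lemma showing that near-optimal $k$-covers can be chosen to be periodic with period $O_{d,k}(1)$, so that extending an optimal cover of $T_d(n_0)$ to $T_d(n_0 + \ell)$ costs exactly $\ell C_{d,k} + O_{d,k}(1)$ additional hyperplanes. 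Should these direct geometric arguments prove intractable, a parallel LP-based route — establishing $f^*(n,d)/n \to \alpha_d$ via a continuous simplex approximation, then bounding the integrality gap $f(n,d,k) - k f^*(n,d)$ by $O_{d,k}(1)$ through a constructive rounding scheme — would in principle also suffice, though controlling the gap uniformly in $n$ presents its own difficulties.
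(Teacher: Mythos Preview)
The statement you are attempting is Conjecture~\ref{c:constantexists}, which the paper explicitly leaves \emph{open}; there is no proof in the paper to compare against. What follows is therefore an assessment of whether your outline would actually resolve the conjecture.

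Your key decomposition fails. The outer slab $S = T_d(n+m)\setminus T_d(n)$ is not covered by an affine copy of $T_d(m)$ plus an $O_{d,k}(1)$-hyperplane seam. Already for $d=2$, placing a copy of $T_2(m)$ at the corner $(n,0)$ leaves the lattice parallelogram $\{(x,y): 0\le x\le n-1,\; n\le x+y\le n+m-1\}$, which has exactly $nm$ points; any line meets it in at most $\max(n,m)$ of them, so $k$-covering the leftover requires $\Omega(k\min(n,m))$ lines, not $O_k(1)$. In higher dimensions the disparity is worse: $|S|$ is of order $m\,n^{d-1}$ while any affine copy of $T_d(m)$ has only $\binom{m+d-1}{d}$ points, so the ``seam'' is essentially the entire slab. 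Covering $S$ directly by $m$ parallel hyperplanes $x_1+\cdots+x_d=c$, each with multiplicity $k$, gives the valid but useless inequality $f(n+m,d,k)\le f(n,d,k)+km$, which does not yield $f(n,d,k)+f(m,d,k)+K_{d,k}$ since $f(m,d,k)<km$ for $k\ge 2$. I do not see how induction on $d$ helps here, because the seam is not supported on a single face but fills a $d$-dimensional region.

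Even granting subadditivity, Fekete only gives $f(n,d,k)\ge C_{d,k}n-K_{d,k}$ together with $f(n,d,k)/n\to C_{d,k}$; it does not give the upper deviation $f(n,d,k)\le C_{d,k}n+O_{d,k}(1)$. Your proposed ``periodicity extraction'' is not an argument but a restatement of what must be proved. Finally, the alternative LP route as you describe it cannot work: the paper already shows $f(n,2,3)=\lceil 9n/4\rceil$ while $3f^*(n,2)=2n+O(1)$, so the integrality gap $f(n,d,k)-kf^*(n,d)$ is $\Theta(n)$, not $O_{d,k}(1)$.
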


In the cases where we know such $C_{d,k}$ exists, we notice a few patterns and ask if they continue. \begin{question}\label{q:conditional1}
    Is $C_{d,k}=\frac{2k}{d+1}+O_d(1)$?
\end{question}
\begin{question}\label{q:conditional2}
    For all $k$, is $C_{d,k+(d+1)}\ge C_{d,k}+2$?
\end{question}

Lastly, whenever we have provided a formula for $f(n,d,k)$, the constructions given in the proof of Proposition~\ref{upboundall} are optimal, up to $O_{d,k}(1)$ terms. If this is the case for all $d, k$, then Conjectures~\ref{c:d=3}~and~\ref{c:constantexists} are true and Questions \ref{q:1modhalf}, \ref{q:conditional1}, and \ref{q:conditional2} can be answered in the affirmative. We conjecture that this is indeed the case, thus conjecturing an asymptotic formula for $f(n,d,k)$ for all pairs of $d$ and $k$.

\begin{conjecture}\label{megaconjecture}
Suppose $d$ is odd and $k= (d+1)q/2 + r$ with $q, r \in \ZZ$ and $1\leq r \leq (d+1)/2$. Then $f(n,d,k)= {C_{d,k}n}+O_{d,k}(1)$ where 
   \[ C_{d,k}=(q+1)\left(1+\frac{r-1}{(d+1)(q+2)/2 - (r-1)}\right). \]
   Suppose $d$ is even and $k=(d+1)q+r$ with $q, r \in \ZZ$ and $1\leq r\leq {d+1}$. Then $f(n,d,k)= {C_{d,k}n}+O_{d,k}(1)$ where
$$ C_{d,k}=\begin{dcases} (2q+1)\left(1 +\frac{r-1}{(d+1)(q+1) - (r-1)} \right)&\mbox{if }1\leq{r}\leq{d/2+1}\mbox{,}\\
(2q+3)\left(1-\frac{d+2-r}{(d+1)(q+1)+(d+2-r)}\right)&\mbox{otherwise.}
\end{dcases}$$
\end{conjecture}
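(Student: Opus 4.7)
The upper bound $f(n,d,k) \leq C_{d,k} n + O_{d,k}(1)$ is exactly Proposition~\ref{upboundall}, so only the matching lower bound requires new ideas. I would proceed by simultaneous induction on $(d, k)$, lexicographically, with base cases supplied by Theorems~\ref{2int}, \ref{prop:1ce2ce}, and \ref{3below}. For the inductive step, let $\mathcal{C}$ be a minimum $k$-cover of $T_d(n)$; by Observation~\ref{boundary} applied to each of the $d+1$ bounding hyperplanes, I may assume every bounding hyperplane has multiplicity below the relevant threshold (specifically, less than a quantity compatible with the block structure in Proposition~\ref{upboundall}, e.g.\ less than $q+1$ or $2q+2$ depending on the parity of $d$).

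The core combinatorial tool is a multi-level threshold argument generalizing the $k=3$ case of Theorem~\ref{2int} and the inductive arguments in Theorem~\ref{3below}. For each standard direction $\nu$ (of which there are $d+1$) and each level $j \in \{0, 1, \ldots, k-1\}$, let $s^{(\nu)}_j$ denote the smallest non-negative integer $c$ such that the hyperplane orthogonal to $\nu$ at position $c$ has multiplicity at most $k - j - 1$ in $\mathcal{C}$. Then the total multiplicity of hyperplanes in direction $\nu$ is at least $\sum_j s^{(\nu)}_j$, while the slice at position $s^{(\nu)}_j$, a copy of $T_{d-1}(n - s^{(\nu)}_j)$, still must be covered $j + 1$ more times, so by Observation~\ref{d-2} and the inductive hypothesis,
\[
|\mathcal{C}| \;\geq\; \sum_{i=0}^{k-1} s^{(\nu)}_i \;+\; f\bigl(n - s^{(\nu)}_j,\, d-1,\, j+1\bigr)
\;=\; \sum_{i=0}^{k-1} s^{(\nu)}_i \;+\; C_{d-1,\, j+1}\bigl(n - s^{(\nu)}_j\bigr) + O_{d,k}(1).
\]
Taking a suitable linear combination of these inequalities across $j$ and averaging over the $d+1$ directions $\nu$ (exploiting the full symmetry group of $T_d(n)$) should eliminate the $s^{(\nu)}_i$ variables. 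The coefficients in this combination are dictated by LP duality with the primal construction of Proposition~\ref{upboundall}: the block sizes $\lceil n/M \rceil$ and multiplicities $q + 2 - j$ (resp.\ $2q+2-j$, $2q+3-j$) in the primal translate, in the dual, to exactly the weights needed to produce the matching lower bound. The concluding symmetry step would mirror the final paragraph of the $k=3$ proof in Theorem~\ref{2int}, where bounds obtained in each of the three standard directions are combined to yield a contradiction to any improvement.

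The main obstacle will be handling non-standard hyperplanes. The threshold argument above, like the inductive proof of Theorem~\ref{3below}, implicitly treats non-standard hyperplanes only through their contribution to covering lower-dimensional slices. However, a non-standard hyperplane may intersect many different slices non-trivially and so cannot be charged cleanly to a single step of the induction. Two possible remedies suggest themselves. One is a global mass-assignment argument in the spirit of Theorems~\ref{1mod3} and \ref{20mod3}, placing mass on a suitable $d$-dimensional shell decomposition so that \emph{every} hyperplane---standard or not---accrues bounded mass; the challenge is that the hexagonal shells of Section~\ref{twodim} have no direct analogue in higher dimensions, and identifying the right combinatorial gadget is non-trivial. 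The second is a \emph{standardization} reduction extending Theorem~\ref{partialpr} to arbitrary dimension, showing that every $k$-cover can be converted to one consisting solely of standard hyperplanes at additive cost $O_{d,k}(1)$. Theorem~\ref{partialpr} achieves this in dimension two under a monotonicity hypothesis; removing that hypothesis and extending to arbitrary $d$ is precisely the gap between the present state of knowledge and a full proof of Conjecture~\ref{megaconjecture}, and I expect this to be where the essential difficulty lies.
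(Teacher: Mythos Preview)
The statement you are attempting to prove is labeled as a \emph{conjecture} in the paper, and the paper does not supply a proof: it is explicitly presented as an open problem in Section~\ref{sec:oq}. So there is no ``paper's own proof'' against which to compare your proposal.

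That said, your write-up is not a proof either, and you candidly acknowledge as much. You correctly observe that the upper bound is Proposition~\ref{upboundall}, and that the lower bound is the missing piece. Your proposed inductive threshold argument is a natural extension of the techniques the paper already uses in Theorems~\ref{2int} and~\ref{3below}, and your diagnosis of the obstacle is accurate: the paper's lower-bound machinery (Observations~\ref{boundary} and~\ref{d-2}, plus the multi-level threshold device) is well-suited to controlling standard hyperplanes but does not adequately account for non-standard ones. Your two suggested remedies---a higher-dimensional mass assignment in the spirit of Section~\ref{fractional}, or a standardization reduction generalizing Theorem~\ref{partialpr}---are reasonable research directions, but neither is carried out, and you yourself note that closing this gap ``is precisely the gap between the present state of knowledge and a full proof.'' In other words, your proposal is a fair summary of the difficulty, consistent with the paper's own assessment, rather than a proof.
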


\section*{Acknowledgments}
This project began at the 2021 Graduate Research Workshop in Combinatorics. Some of the results in Section \ref{twodim} were included in the second author's PhD dissertation \cite{C}.

\end{document}